\newtheorem{theorem}{Theorem}
\newtheorem{prop}[theorem]{Proposition}
\newtheorem{remark}{Remark}
\newtheorem{claim}{Claim}
\newenvironment{proof-sketch}{\noindent{\bf Sketch of Proof}\hspace*{1em}}{\qed\bigskip}
\newcommand{\RR}{\mathbb R}
\newcommand{\NN}{\mathbb N}
\newcommand{\ZZ}{\mathbb Z}
\renewcommand{\leq}{\leqslant}
\renewcommand{\geq}{\geqslant}
\begin{document}
\title[Nonlinear Dirichlet problems with unilateral growth]{Nonlinear Dirichlet problems with unilateral growth on the reaction}
\author[N.S. Papageorgiou]{Nikolaos S. Papageorgiou}
\address[N.S. Papageorgiou]{National Technical University, Department of Mathematics,
				Zografou Campus, Athens 15780, Greece \& Institute of Mathematics, Physics and Mechanics, 1000 Ljubljana, Slovenia}
\email{\tt npapg@math.ntua.gr}
\author[V.D. R\u{a}dulescu]{Vicen\c{t}iu D. R\u{a}dulescu}
\address[V.D. R\u{a}dulescu]{Faculty of Applied Mathematics, AGH University of Science and Technology, al. Mickiewicza 30, 30-059 Krak\'ow, Poland \& Institute of Mathematics, Physics and Mechanics, 1000 Ljubljana, Slovenia \& Institute of Mathematics ``Simion Stoilow" of the Romanian Academy, P.O. Box 1-764, 014700 Bucharest, Romania}
\email{\tt vicentiu.radulescu@imar.ro}
\author[D.D. Repov\v{s}]{Du\v{s}an D. Repov\v{s}}
\address[D.D. Repov\v{s}]{Faculty of Education and Faculty of Mathematics and Physics, University of Ljubljana \& Institute of Mathematics, Physics and Mechanics, 1000 Ljubljana, Slovenia}
\email{\tt dusan.repovs@guest.arnes.si}
\keywords{Unilateral growth, constant sign and nodal solutions, multiplicity theorems, critical groups.\\
\phantom{aa} 2010 AMS Subject Classification: 35J20, 35J60, 58E05}
\bigskip
\begin{abstract}
We consider a nonlinear Dirichlet problem driven by the $p$-Laplace differential operator with a reaction which has a subcritical growth restriction only from above. We prove two multiplicity theorems producing three nontrivial solutions, two of constant sign and the third nodal. The two multiplicity theorems differ on the geometry near the origin. In the semilinear case (that is, $p=2$), using Morse theory (critical groups), we produce a second nodal solution for a total of four nontrivial solutions. As an illustration, we show that our results incorporate and  significantly extend the multiplicity results existing for a class of parametric, coercive Dirichlet problems.
\end{abstract}
\maketitle

\section{Introduction}

Let $\Omega\subseteq\RR^N$ be a bounded domain with a $C^2$-boundary $\partial\Omega$. In this paper we study the following nonlinear Dirichlet problem
\begin{equation}\label{eq1}\left\{\begin{array}{lll}
&\displaystyle	-\Delta_pu(z)=f(z,u(z))&\quad \mbox{in}\ \Omega\\
&\displaystyle u=0&\quad\mbox{on}\ \partial\Omega.\end{array}\right.
\end{equation}

Here, $\Delta_p$ denotes the $p$-Laplace differential operator defined by
$$\Delta_pu=\mbox{div}\, (|Du|^{p-2}Du)\ \mbox{for all}\ u\in W^{1,p}_{0}(\Omega),\ 1<p<\infty.$$

Usually such problems are examined under the assumption that the reaction $f(z,\cdot)$ exhibits subcritical growth from above and below. In contrast, we assume here that $f(z,\cdot)$ is subcritical only from above, while from below no growth restriction is imposed on $f(z,\cdot)$. In this setting, we prove a multiplicity theorem producing at least three nontrivial solutions, two of constant sign (one positive and one negative) and the third nodal (that is, sign-changing). Our multiplicity result compares with those proved by Liu \& Liu \cite{14}, Liu \cite{15}, and Papageorgiou \& Papageorgiou \cite{18} who proved three solutions theorems for certain classes of coercive $p$-Laplacian equations. We also refer to Papageorgiou, R\u adulescu \$ Repov\v{s} \cite{prr1,prr2}
for multiplicity properties in the context of Robin problems with superlinear reaction and  super-diffusive mixed problems.

In all the aforementioned works, the reaction has bilateral subcritical growth and no nodal solutions are produced. In addition, in the present work, for the semilinear problem $(p=2)$, using Morse theory (critical groups), we produce a second nodal solution, for a total of four nontrivial solutions. Finally, we mention the works of Villegas \cite{21} and Filippakis, Gasinski \& Papageorgiou \cite{7} who proved existence theorems for unilaterally restricted scalar problems (that is, $N=1$). Villegas \cite{21} studied semilinear (that is, $p=2$) Neumann problems and Filippakis, Gasinski \& Papageorgiou \cite{7} considered nonlinear (that is, $1<p<\infty$) periodic with a nonsmooth potential.

\section{Mathematical background}

Let $X$ be a Banach space, $X^*$ its topological dual, and let $\left\langle \cdot,\cdot\right\rangle$  denote the duality brackets for the pair $(X^*,X)$. We say that a function $\varphi\in C^1(X)$ satisfies the {\it Palais-Smale condition} ({\it $PS$-condition}, for short), if the following property holds:
\begin{center}
``Every sequence $\{u_n\}_{n\geq 1}\subseteq X$ such that $\{\varphi(u_n)\}_{n\geq 1}\subseteq X$ is bounded and
	$$\varphi'(u_n)\rightarrow 0\ \mbox{in}\ X^*\ \mbox{as}\ n\rightarrow\infty,$$
	admits a strongly convergent subsequence."
\end{center}

This is a compactness-type condition on the functional $\varphi$, which leads to a deformation theorem, from which one can derive the minimax theory of the critical values of $\varphi$. A basic result in this theory is the so-called ``mountain pass theorem", due to Ambrosetti \& Rabinowitz \cite{4}.
\begin{theorem}\label{th1}
	Assume that $X$ is a Banach space, $\varphi\in C^1(X)$ and satisfies the $PS$-condition, $u_0,u_1\in X,\ ||u_1-u_0||>\rho>0$,
	$$\max\{\varphi(u_0),\varphi(u_1)\}<\inf\left\{\varphi(u):||u-u_0||=\rho\right\}=m_{\rho}$$
	and $c=\inf\limits_{\gamma\in \Gamma}\max\limits_{0\leq t\leq 1}\varphi(\gamma(t))$, where $\Gamma=\{\gamma\in C([0,1],X):\gamma(0)=u_0,\gamma(1)=u_1\}$. Then $c\geq m_{\rho}$ and $c$ is a critical value of $\varphi$.
\end{theorem}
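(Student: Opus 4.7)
The plan is to prove the two assertions in sequence: first the elementary inequality $c \ge m_\rho$, then the harder fact that $c$ is critical. For the inequality, I would fix an arbitrary $\gamma \in \Gamma$ and consider the continuous real function $t \mapsto \|\gamma(t)-u_0\|$ on $[0,1]$; since it equals $0$ at $t=0$ and equals $\|u_1-u_0\|>\rho$ at $t=1$, the intermediate value theorem supplies $t^\ast \in (0,1)$ with $\|\gamma(t^\ast)-u_0\|=\rho$. Hence $\varphi(\gamma(t^\ast)) \ge m_\rho$, which gives $\max_{t\in[0,1]}\varphi(\gamma(t)) \ge m_\rho$, and taking the infimum over $\gamma$ yields $c \ge m_\rho$.

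For the second assertion, I would argue by contradiction and assume $c$ is a regular value of $\varphi$. Since $c \ge m_\rho > \max\{\varphi(u_0),\varphi(u_1)\}$, I can pick $\bar\varepsilon>0$ small enough that $c-2\bar\varepsilon > \max\{\varphi(u_0),\varphi(u_1)\}$. The key tool is the standard deformation lemma (a consequence of the $PS$-condition together with a pseudogradient construction on the regular set $\{\varphi' \ne 0\}$): applied to the closed set $A=\emptyset$ and to this $\bar\varepsilon$, it produces $\varepsilon \in (0,\bar\varepsilon]$ and a continuous map $\eta: [0,1] \times X \to X$ with the properties $\eta(0,\cdot)=\mathrm{id}_X$; $\eta(s,u)=u$ whenever $|\varphi(u)-c|\ge \bar\varepsilon$; and $\eta(1,\varphi^{c+\varepsilon}) \subseteq \varphi^{c-\varepsilon}$, where $\varphi^a := \{u\in X : \varphi(u)\le a\}$.

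Next, by the definition of $c$ as an infimum, I can select $\gamma_0 \in \Gamma$ with $\max_{t\in[0,1]}\varphi(\gamma_0(t)) < c+\varepsilon$, so $\gamma_0([0,1]) \subseteq \varphi^{c+\varepsilon}$. Define $\tilde\gamma(t) := \eta(1,\gamma_0(t))$. Since $\varphi(u_0)$ and $\varphi(u_1)$ both lie below $c-\bar\varepsilon$, the second property of $\eta$ forces $\eta(1,u_0)=u_0$ and $\eta(1,u_1)=u_1$; combined with the continuity of $\eta$ and of $\gamma_0$ this shows $\tilde\gamma \in \Gamma$. By the third property of $\eta$, $\max_t \varphi(\tilde\gamma(t)) \le c-\varepsilon$, contradicting the definition of $c$. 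Hence $c$ must be a critical value.

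The main obstacle is really packaged into the deformation lemma itself: constructing a locally Lipschitz pseudogradient vector field on the (possibly nonsmooth) regular set of $\varphi$ and using the $PS$-condition to obtain a uniform lower bound on $\|\varphi'\|$ in a strip around level $c$, which ensures that the flow along this field pushes $\varphi^{c+\varepsilon}$ down below $c-\varepsilon$ in finite time. Once this standard ingredient is available, the argument above is essentially formal.
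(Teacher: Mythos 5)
The paper does not prove Theorem \ref{th1}; it quotes it as the classical mountain pass theorem of Ambrosetti and Rabinowitz \cite{4}, so there is no internal proof to compare against. Your argument is the standard one and is correct: the intermediate value theorem applied to $t\mapsto\|\gamma(t)-u_0\|$ gives $c\ge m_\rho$, and the criticality of $c$ follows by contradiction from the quantitative deformation lemma, whose applicability at a regular level $c$ rests on the $PS$-condition. One step worth making fully explicit is the passage from ``$c$ is a regular value and $\varphi$ satisfies $PS$'' to the uniform lower bound $\|\varphi'(u)\|\ge\alpha>0$ in a strip $|\varphi(u)-c|\le\varepsilon_0$ needed for the pseudogradient flow: if no such $\alpha$ existed, one could pick $u_n$ with $\varphi(u_n)\to c$ and $\varphi'(u_n)\to 0$, and the $PS$-condition would produce a subsequence converging to some $u\in K_\varphi^c$, contradicting the assumption. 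With that observation in place, your choice of $\bar\varepsilon$ separating $\max\{\varphi(u_0),\varphi(u_1)\}$ from $c$, the selection of a near-optimal path $\gamma_0$, and the deformed path $\tilde\gamma=\eta(1,\gamma_0(\cdot))\in\Gamma$ with $\max_t\varphi(\tilde\gamma(t))\le c-\varepsilon$ give the desired contradiction exactly as you describe.
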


In the study of problem (\ref{eq1}) we will use the Sobolev space $W^{1,p}_{0}(\Omega)$ (when $p=2$, we will write $H^{1}_{0}(\Omega)$) and the ordered Banach space $C^{1}_{0}(\overline{\Omega})=\{u\in C^1(\overline{\Omega}):u|_{\partial\Omega}=0\}$, with the order cone
$$C_+=\{u\in C^1_0(\overline{\Omega}):u(z)\geq 0\ \mbox{for all}\ z\in \overline{\Omega}\}.$$

This cone has a nonempty interior given by
$${\rm int}\,C_+=\{u\in C_+:u(z)>0\ \mbox{for all}\ z\in\Omega,\ \frac{\partial u}{\partial n}<0\ \mbox{on}\ \partial\Omega\}.$$

Here  we denote the outward unit normal on $\partial\Omega$ by $n(\cdot)$.

Let $f_0:\Omega\times\RR\rightarrow\RR$ be a Carath\'eodory function such that
$$|f_0(z,x)|\leq a(z)(1+|x|^{r-1})\ \mbox{for almost all}\ z\in\Omega,\ \mbox{and all}\ x\in \RR,$$
with $a\in L^{\infty}(\Omega)$ and $$1<r<p^*=\left\{\begin{array}{ll}
	\frac{Np}{N-p}&\mbox{if}\ p<N\\
	+\infty&\mbox{if}\ p\geq N
\end{array}\right.\quad \mbox{(the critical Sobolev exponent)}.$$ We set $F_0(z,x)=\int^{x}_{0}f_0(z,s)ds$ and consider the $C^1$-functional $\varphi_0:W^{1,p}_{0}(\Omega)\rightarrow\RR$ defined by
$$\varphi_0(u)=\frac{1}{p}||Du||^p_p-\int_{\Omega}F_0(z,u(z))dz\ \mbox{for all}\ u\in W^{1,p}_{0}(\Omega).$$

From Garcia Azorero, Manfredi \& Peral Alonso \cite{9}, we recall the following result.
\begin{prop}\label{prop2}
	Assume that $u_0\in W^{1,p}_{0}(\Omega)$ is a local $C^1_0(\overline{\Omega})$-minimizer of $\varphi_0$, that is, there exists $\rho_0>0$ such that
	$$\varphi_0(u_0)\leq\varphi_0(u_0+h)\ \mbox{for all}\ h\in C^1_0(\overline{\Omega})\ \mbox{with}\ ||h||_{C^1_0(\overline{\Omega})}\leq\rho_0.$$
Then $u_0\in C^{1,\alpha}_{0}(\overline{\Omega})$ for some $\alpha\in(0,1)$ and $u_0$ is also a local $W^{1,p}_{0}(\Omega)$-minimizer of $\varphi_0$, that is, there exists $\rho_1>0$ such that
	$$\varphi_0(u_0)\leq \varphi_0(u_0+h)\ \mbox{for all}\ h\in W^{1,p}_{0}(\Omega)\ \mbox{with}\ ||h||\leq\rho_1.$$
\end{prop}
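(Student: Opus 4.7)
The approach is a contradiction argument grounded in nonlinear regularity theory. First I would upgrade $u_0$ to $C^{1,\alpha}_0(\overline{\Omega})$: because $C^\infty_c(\Omega)$ embeds continuously into $C^1_0(\overline{\Omega})$, perturbations $u_0+t\phi$ with $\phi\in C^\infty_c(\Omega)$ and $|t|$ small remain inside the admissible $C^1$-ball of radius $\rho_0$, so the local $C^1_0$-minimality of $u_0$ gives the weak Euler equation $-\Delta_p u_0 = f_0(z,u_0)$ in $\Omega$. Since $u_0\in C^1_0(\overline{\Omega})\subseteq L^\infty(\Omega)$ and $f_0(z,\cdot)$ has subcritical polynomial growth, the right-hand side lies in $L^\infty(\Omega)$, and Lieberman's boundary $C^{1,\alpha}$-regularity theorem for the $p$-Laplacian with bounded data yields the claimed regularity for some $\alpha\in(0,1)$.

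For the second assertion, suppose for contradiction that $u_0$ is \emph{not} a local $W^{1,p}_0$-minimizer. Then for every $\varepsilon>0$ the constrained infimum $m_\varepsilon:=\inf\{\varphi_0(u):\|u-u_0\|\leq\varepsilon\}$ is strictly less than $\varphi_0(u_0)$. Because $F_0$ has subcritical polynomial growth, $\varphi_0$ is sequentially weakly lower semicontinuous, and the ball $\overline{B}_\varepsilon(u_0)\subseteq W^{1,p}_0(\Omega)$ is weakly compact, so the infimum is attained at some $u_\varepsilon$ with $\|u_\varepsilon-u_0\|\leq\varepsilon$, and consequently $u_\varepsilon\to u_0$ strongly in $W^{1,p}_0(\Omega)$ as $\varepsilon\to 0^+$. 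Applying the Lagrange multiplier rule to the constraint $\|u-u_0\|^p\leq\varepsilon^p$ produces $\lambda_\varepsilon\geq 0$ with
$$-\Delta_p u_\varepsilon - \lambda_\varepsilon\Delta_p(u_\varepsilon-u_0) = f_0(z,u_\varepsilon) \quad \mbox{in}\ \Omega, \qquad u_\varepsilon|_{\partial\Omega}=0.$$

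The central and most delicate step is to derive a uniform $C^{1,\beta}$-bound on the family $\{u_\varepsilon\}$ despite the additional Lagrange-multiplier term. I would first run a Moser iteration on the displayed equation, exploiting the fact that the extra term has the same $p$-Laplacian structure as the principal part, so that after grouping the leading-order terms one still has a quasilinear equation satisfying the structural inequalities required by DiBenedetto--Lieberman theory, with a right-hand side that is bounded uniformly in $\varepsilon$ once an $L^\infty$-bound on $u_\varepsilon$ is established. Lieberman's $C^{1,\beta}$-estimates up to the boundary then yield a uniform bound for $\|u_\varepsilon\|_{C^{1,\beta}_0(\overline{\Omega})}$, and the compact embedding $C^{1,\beta}_0(\overline{\Omega})\hookrightarrow C^1_0(\overline{\Omega})$ upgrades the convergence to $u_\varepsilon\to u_0$ in the $C^1_0$-norm.

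Once this uniform $C^1$-convergence is available, the contradiction is immediate: for $\varepsilon$ sufficiently small one has $\|u_\varepsilon-u_0\|_{C^1_0(\overline{\Omega})}\leq\rho_0$, so the assumed $C^1_0$-minimality of $u_0$ gives $\varphi_0(u_0)\leq\varphi_0(u_\varepsilon)$, directly contradicting $\varphi_0(u_\varepsilon)<\varphi_0(u_0)$. I expect the hardest technical obstacle to be precisely the uniform regularity estimate above: the Lagrange multiplier $\lambda_\varepsilon$ is a priori only known to be nonnegative and could in principle blow up, so one must carefully exploit the compatibility between the two $p$-Laplacian terms (or alternatively replace the ball constraint by a penalty formulation) to ensure that the combined principal operator remains uniformly elliptic in the degenerate sense required by Lieberman's framework.
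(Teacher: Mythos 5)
The paper itself does not prove Proposition \ref{prop2}; it is quoted directly from Garcia Azorero, Manfredi \& Peral Alonso \cite{9}, so there is no in-house argument to compare your attempt against. Your sketch follows the standard strategy underlying that reference (and, for $p=2$, the Brezis--Nirenberg prototype): establish $C^{1,\alpha}_0(\overline{\Omega})$-regularity of $u_0$ first, then argue by contradiction via constrained minimization over small $W^{1,p}_0$-balls, a Lagrange multiplier rule, uniform up-to-the-boundary $C^{1,\beta}$-estimates, and the compact embedding $C^{1,\beta}_0(\overline{\Omega})\hookrightarrow C^1_0(\overline{\Omega})$. The first half of your argument and the concluding contradiction are correct once the uniform estimates are secured.

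Where the sketch stops short of a proof is exactly at the point you flag. The multiplier $\lambda_\varepsilon$ is a priori only of order $O(\varepsilon^{1-p})$ (test the Euler equation with $u_\varepsilon-u_0$ and use that $\|D(u_\varepsilon-u_0)\|_p^p=\varepsilon^p$ when the constraint is active), so it may blow up, and the remark that the extra term ``has the same $p$-Laplacian structure'' does not by itself yield structure constants independent of $\lambda_\varepsilon$. The observation that has to be made explicit is a renormalization: divide the Euler equation by $1+\lambda_\varepsilon$, set $t_\varepsilon=\lambda_\varepsilon/(1+\lambda_\varepsilon)\in[0,1)$, and view $u_\varepsilon$ as a weak solution of
$$-\,\mathrm{div}\,\tilde a_{t_\varepsilon}(z,Du_\varepsilon)=\frac{1}{1+\lambda_\varepsilon}f_0(z,u_\varepsilon),\qquad \tilde a_t(z,\xi)=(1-t)|\xi|^{p-2}\xi+t|\xi-Du_0(z)|^{p-2}\bigl(\xi-Du_0(z)\bigr).$$
Because you have already shown $u_0\in C^{1,\alpha}_0(\overline{\Omega})$, $Du_0$ is bounded and H\"older continuous, and one then verifies that $\tilde a_t$ satisfies the DiBenedetto--Lieberman structure conditions with constants depending only on $p$, $N$ and $\|Du_0\|_{C^{0,\alpha}}$, \emph{uniformly} in $t\in[0,1]$, while the right-hand side stays uniformly bounded once an $L^\infty$ bound on $u_\varepsilon$ is in hand (for which it is convenient to test with $(u_\varepsilon-u_0-M)^{\pm}$-type functions, so the multiplier term has a favorable sign and can be dropped). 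Supplying this normalization and the verification that the convex combination $\tilde a_t$ genuinely falls within Lieberman's framework is the missing substance behind your ``compatibility'' and ``penalty'' remarks.
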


Hereafter,  we denote the norm of the Sobolev space $W^{1,p}_{0}(\Omega)$ by $||\cdot||$. By the Poincar\'e inequality we have
$$||u||=||Du||_{p}\ \mbox{for all}\ u\in W^{1,p}_{0}(\Omega).$$

We will also use some basic facts about the spectrum of $(-\Delta_p,W^{1,p}_{0}(\Omega))$. So, we consider the following nonlinear eigenvalue problem
\begin{equation}\label{eq2}
	-\Delta_pu(z)=\hat{\lambda}|u(z)|^{p-2}u(z)\ \mbox{in}\ \Omega,\ u|_{\partial\Omega}=0.
\end{equation}

We say that $\hat{\lambda}$ is an {\it eigenvalue} of $(-\Delta_p,W^{1,p}_{0}(\Omega))$, if the problem (\ref{eq2}) admits a nontrivial solution $\hat{u}\in W^{1,p}_{0}(\Omega)$, which is an eigenfunction corresponding to the eigenvalue $\hat{\lambda}$. We know that there is a smallest eigenvalue $\hat{\lambda}_1>0$, which is simple, isolated and admits the following variational characterization:
\begin{equation}\label{eq3}
	\hat{\lambda}_1=\inf\left\{\frac{||Du||^p_p}{||u||^p_p}:u\in W^{1,p}_{0}(\Omega),u\neq 0\right\}.
\end{equation}

The infimum in (\ref{eq3}) is realized on the corresponding one-dimensional eigenspace (recall that $\hat{\lambda}_1>0$ is simple). It is clear from (\ref{eq3}) that the elements of this eigenspace do not change sign. Let $\hat{u}_1$ be the $L^p$-normalized, positive eigenfunction corresponding to $\hat{\lambda}_1>0$. From the nonlinear regularity theory and the nonlinear maximum principle (see, for example, Gasinski \& Papageorgiou \cite[pp. 737-738]{10}), we have $\hat{u}_1\in {\rm int}\,C_+$. From the Ljusternik-Schnirelmann minimax scheme, we can obtain a whole strictly increasing sequence $\{\hat{\lambda}_k\}_{k\geq 1}$ of eigenvalues such that $\hat{\lambda}_k\rightarrow+\infty$. We do not know if this sequence exhausts the spectrum of $(-\Delta_p,W^{1,p}_{0}(\Omega))$. This is the case if $p=2$ (linear eigenvalue problem) or $N=1$ (scalar eigenvalue problem). Since $\hat{\lambda}_1$ is isolated, the second eigenvalue $\hat{\lambda}^*_2>\hat{\lambda}_1$ is well-defined by
$$\hat{\lambda}^*_2=\inf\{\hat{\lambda}:\hat{\lambda}>\hat{\lambda}_1,\hat{\lambda}\ \mbox{is an eigenvalue of}\ (-\Delta_p,W^{1,p}_{0}(\Omega))\}.$$

We know that $\hat{\lambda}^*_2=\hat{\lambda}_2$, that is, the second eigenvalue and the second Ljusternik-Schnirelmann eigenvalue coincide. For $\hat{\lambda}_2$ we have the following minimax characterization due to Cuesta, de Figueiredo \& Gossez \cite{6}.
\begin{prop}\label{prop3} We have
	$$\hat{\lambda}_2=\inf\limits_{\hat{\gamma}\in\hat{\Gamma}}\max\limits_{-1\leq t\leq 1}||D\hat{\gamma}(t)||^p_p,$$ where $$\hat{\Gamma}=\{\hat{\gamma}\in C([-1,1],M):\hat{\gamma}(-1)=-\hat{u}_1,\hat{\gamma}(1)=\hat{u}_1\}$$ and
	$$M=W^{1,p}_{0}(\Omega)\cap \partial B^{L^p}_{1}\ \text{and}\ \partial B^{L^p}_{1}=\{u\in L^p(\Omega):||u||_p=1\}.$$
\end{prop}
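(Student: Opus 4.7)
The plan is to regard $\varphi(u):=\|Du\|_p^p$ as a $C^1$-functional on the $C^1$-Banach submanifold $M$ of $W^{1,p}_0(\Omega)$. Standard Lagrange multiplier and regularity computations show that the critical points of $\varphi|_M$ are exactly the $L^p$-normalized eigenfunctions of $(-\Delta_p,W^{1,p}_0(\Omega))$ and the corresponding critical values are the eigenvalues, while $\varphi|_M$ satisfies the Palais-Smale condition on $M$. Writing $c$ for the right-hand side of the claimed identity, the goal is to prove the two inequalities $c\ge \hat\lambda_2$ and $c\le \hat\lambda_2$.

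For the inequality $c\ge \hat\lambda_2$ I would argue by contradiction, supposing $c<\hat\lambda_2$. First I would establish $c>\hat\lambda_1$: simplicity of $\hat\lambda_1$ gives that $\{u\in M:\varphi(u)=\hat\lambda_1\}=\{\pm\hat u_1\}$, and isolation of $\hat\lambda_1$ yields disjoint open $M$-neighborhoods $U^{\pm}$ of $\pm\hat u_1$ together with $\delta>0$ such that $\varphi\ge \hat\lambda_1+\delta$ on $M\setminus(U^+\cup U^-)$; every $\hat\gamma\in\hat\Gamma$ must pass through this complement, so $c\ge \hat\lambda_1+\delta$. Since by assumption $\hat\lambda_1<c<\hat\lambda_2=\hat\lambda_2^*$, the level $c$ is a regular value of $\varphi|_M$. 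Applying the standard deformation lemma on the Finsler manifold $M$ (the endpoints $\pm\hat u_1$ sit at level $\hat\lambda_1$, which is strictly below $c$, hence they are fixed by the deformation) to a path $\hat\gamma\in\hat\Gamma$ that nearly realizes $c$ produces a new path in $\hat\Gamma$ with maximum strictly less than $c$, contradicting the definition of $c$.

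For the inequality $c\le \hat\lambda_2$ I would appeal to the Ljusternik-Schnirelmann characterization $\hat\lambda_2=\inf_{A\in\mathcal A_2}\sup_{u\in A}\varphi(u)$, where $\mathcal A_2$ is the family of symmetric compact subsets of $M$ of Krasnoselskii genus at least $2$. Given $\varepsilon>0$, fix $A_\varepsilon\in\mathcal A_2$ with $\sup_{A_\varepsilon}\varphi\le\hat\lambda_2+\varepsilon$. Genus at least $2$ forces the existence of a continuous odd map $\tau:S^1\to A_\varepsilon$; unfolding $\tau$ between antipodal points and reparametrizing yields a continuous curve in $M$ from some $-v$ to $v$ along which $\varphi\le\hat\lambda_2+\varepsilon$. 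One then concatenates with short connectors from $\pm v$ to $\pm\hat u_1$ constructed by the negative pseudo-gradient flow of $\varphi|_M$ inside the sublevel set $\{\varphi\le\hat\lambda_2+\varepsilon\}$, exploiting that $\pm\hat u_1$ are the only minima and $\hat u_1\in\mathrm{int}\,C_+$. The resulting curve lies in $\hat\Gamma$, so $c\le\hat\lambda_2+\varepsilon$, and letting $\varepsilon\downarrow 0$ gives the desired inequality.

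The main technical obstacle is the joining step in the second inequality: controlling $\varphi$ along the connectors from $\pm v$ to $\pm\hat u_1$ without exceeding the level $\hat\lambda_2+\varepsilon$. This is where the isolation of $\hat\lambda_1$ and the compactness coming from the Palais-Smale condition are essential; once one knows that the sublevel set $\{\varphi\le\hat\lambda_2+\varepsilon\}$ retracts equivariantly onto a neighborhood of its critical set, a flow-based argument (as developed by Cuesta, de Figueiredo and Gossez \cite{6}) produces these connectors and closes the proof.
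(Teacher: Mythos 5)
The paper does not prove Proposition \ref{prop3}; it is quoted as a known result of Cuesta, de Figueiredo and Gossez \cite{6}, so there is no internal argument to compare against. Judged on its own terms, your argument for $c\ge\hat\lambda_2$ is sound and can even be shortened: for any $\hat\gamma\in\hat\Gamma$, the symmetric compactum $\hat\gamma([-1,1])\cup(-\hat\gamma([-1,1]))$ has Krasnoselskii genus $\ge 2$ (an odd map to $S^0$ would put the connected image $\hat\gamma([-1,1])$ entirely on one side, yet it contains the antipodal pair $\pm\hat u_1$), so $\max_t\varphi(\hat\gamma(t))=\sup_A\varphi\ge\hat\lambda_2$ directly from the Ljusternik--Schnirelmann definition, with no deformation argument required.

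The direction $c\le\hat\lambda_2$ is where the substance lies, and your outline leaves two genuine gaps. First, genus $\ge 2$ does \emph{not} ``force the existence of a continuous odd map $\tau:S^1\to A_\varepsilon$'': genus is a constraint on odd maps \emph{out of} $A_\varepsilon$ into spheres, not on odd maps of spheres into $A_\varepsilon$, and a symmetric compactum of genus $2$ need not be an odd image of $S^1$ (it need not even be path-connected). You would need either the equivalent minimax over odd images of $S^1$ --- whose equivalence with the genus characterization is itself a lemma --- or a different choice of almost-minimizing set. Second, the joining step you flag is not a hand-wave: the negative pseudo-gradient flow never reaches $\pm\hat u_1$ in finite time, $v$ might itself be a critical point at level $\hat\lambda_2$ (so the flow from $v$ is stationary), and even with an odd pseudo-gradient you must still show that near $\pm\hat u_1$ the sublevel set $\{\varphi\le\hat\lambda_2+\varepsilon\}$ is locally path-connected so that a truncated flow line can be completed by a short arc without exceeding the level. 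In \cite{6} the join is handled by an essentially explicit construction using the positive and negative parts of a sign-changing eigenfunction, along which $\varphi$ can be evaluated directly because $u^+$ and $u^-$ have disjoint supports; an argument of comparable precision is what is needed to close your proof.
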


As we already said, in the case $p=2$ (linear eigenvalue problem), the spectrum of $(-\Delta, H^1_0(\Omega))$ consists of a sequence $\{\hat{\lambda}_k\}_{k\geq 1}$ of eigenvalues such that $\hat{\lambda}_k\rightarrow+\infty$ as $k\rightarrow+\infty$. We denote the corresponding eigenspace by $E(\hat{\lambda}_k)$. We have
$$H^1_0(\Omega)=\overline{{\underset{{k\geq 1}}\oplus}E(\hat{\lambda}_k)}.$$

In this case, we have nice variational characterizations for all the eigenvalues. Namely, we have
\begin{equation}\label{eq4}
	\hat{\lambda}_1=\inf\left\{\frac{||Du||^2_2}{||u||^2_2}:u\in H^1_0(\Omega),u\neq 0\right\}\ \mbox{see (\ref{eq3})}
\end{equation}
and for $k\geq 2$
\begin{eqnarray}\label{eq5}
	\hat{\lambda}_k&=&\inf\left\{\frac{||Du||^2_2}{||u||^2_2}:u\in \widehat{H}_k=\overline{{\underset{{n\geq k}}\oplus}E(\hat{\lambda}_n)},\ u\neq 0\right\}\nonumber\\
	&=&\sup\left\{\frac{||Du||^2_2}{||u||^2_2}:u\in \bar{H}_k=\overset{k}{\underset{{n=1}}\oplus}E(\hat{\lambda}_n),\ u\neq 0\right\}.
\end{eqnarray}

 Both the infimum and the supremum in (\ref{eq5}) are realized on the corresponding eigenspace $E(\hat{\lambda}_k)$. Every such space has the so-called ``unique continuation property" (UCP for short), which means that if $u\in E(\hat{\lambda}_k)$ and $u$ vanishes on a set of positive measure, then $u\equiv 0$. Note that by standard regularity theory, $E(\hat{\lambda}_k)\subseteq C^1_0(\overline{\Omega})$ and $E(\hat{\lambda}_k)$ is finite-dimensional. Invoking (\ref{eq4}), (\ref{eq5}) and the UCP, we have the following property.
\begin{prop}\label{prop4}
	\begin{itemize}
		\item[(a)] If $\xi\in L^{\infty}(\Omega)$ with $\xi(z)\geq\hat{\lambda}_k$ for almost all $z\in\Omega$ with strict inequality on a set of positive measure, then
		$$||Du||^2_2-\int_{\Omega}\xi(z)u^2dz\leq -\hat{c}||u||^2\ \mbox{for all}\ u\in \bar{H}_k=\overset{k}{\underset{\mathrm{n=1}}\oplus}E(\hat{\lambda}_k).$$
		\item[(b)] If $\xi\in L^{\infty}(\Omega)$ with $\xi(z)\leq\hat{\lambda}_k$ for almost all $z\in\Omega$ with strict inequality on a set of positive measure, then there exists $\tilde{c}>0$ such that
		$$||Du||^2_2-\int_{\Omega}\xi(z)u^2dz\geq\tilde{c}||u||^2\ \mbox{for all}\ u\in \overline{{\underset{\mathrm{n\geq k}}\oplus}E(\hat{\lambda}_k)}.$$
	\end{itemize}
\end{prop}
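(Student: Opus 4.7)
The plan is to prove both parts by contradiction, using the variational characterizations (\ref{eq4})--(\ref{eq5}) of the eigenvalues together with the unique continuation property (UCP) of the eigenspaces. In both cases, the obvious one-sided inequality (with constant $0$ rather than $\hat c$ or $\tilde c$) is immediate from (\ref{eq5}); the content of the proposition is that the gap is bounded away from zero, and for this we need UCP to rule out limiting configurations where equality is attained.

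For part (a), first observe that every $u\in \bar H_k$ satisfies $||Du||^2_2\leq \hat\lambda_k||u||^2_2$, so
\[
||Du||^2_2-\int_\Omega \xi(z)u^2\,dz\;\leq\;-\int_\Omega(\xi(z)-\hat\lambda_k)u^2\,dz\;\leq\;0.
\]
Suppose, toward a contradiction, that no $\hat c>0$ works. Since $\bar H_k$ is finite-dimensional, a minimizing sequence (normalized so that $||u||=1$) has a subsequence converging in $\bar H_k$ to some $u$ with $||u||=1$ realizing equality in both of the above estimates. The first equality forces $u\in E(\hat\lambda_k)$ (this is the ``top'' eigenspace of $\bar H_k$), while the second forces $u=0$ on the set $\{\xi>\hat\lambda_k\}$, which has positive measure by hypothesis. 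The UCP for $E(\hat\lambda_k)$ then yields $u\equiv 0$, contradicting $||u||=1$.

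For part (b), the analogous Rayleigh quotient estimate on $\hat H_k=\overline{\bigoplus_{n\geq k}E(\hat\lambda_n)}$ gives
\[
||Du||^2_2-\int_\Omega \xi(z)u^2\,dz\;\geq\;\int_\Omega(\hat\lambda_k-\xi(z))u^2\,dz\;\geq\;0,
\]
so the point is again to exclude a minimizing sequence realizing zero. The difference from (a) is that $\hat H_k$ is infinite-dimensional, so I would take a sequence $u_n\in \hat H_k$ with $||u_n||=1$ along which the quadratic form tends to $0$, pass to a weakly convergent subsequence $u_n\rightharpoonup u$ in $H^1_0(\Omega)$, and use the compact embedding $H^1_0(\Omega)\hookrightarrow L^2(\Omega)$ to get $u_n\to u$ in $L^2$. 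Closedness of $\hat H_k$ gives $u\in \hat H_k$, and weak lower semicontinuity of $||D\cdot||_2^2$ shows that $u$ achieves equality in both inequalities above. Expanding $u=\sum_{n\geq k}u_n$ with $u_n\in E(\hat\lambda_n)$, the first equality forces $u\in E(\hat\lambda_k)$, and the second forces $u$ to vanish on $\{\xi<\hat\lambda_k\}$; UCP then gives $u\equiv 0$. But then $\int_\Omega \xi u_n^2\,dz\to 0$, so $||Du_n||^2_2\to 0$, contradicting $||u_n||=1$.

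The main obstacle, as indicated, is ensuring that the limit $u$ in part (b) lies in the correct eigenspace so that UCP applies; this is exactly where the spectral decomposition of $\hat H_k$ (combined with the sharp characterization (\ref{eq5})) is essential, since it converts the equality case of a ``dumb'' Rayleigh inequality into membership in the finite-dimensional space $E(\hat\lambda_k)\subseteq C^1_0(\overline\Omega)$ where UCP is available.
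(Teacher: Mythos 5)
Your proof is correct and uses exactly the ingredients the paper invokes: the variational characterizations of $\hat{\lambda}_k$ on $\bar{H}_k$ and $\hat{H}_k$ together with the unique continuation property, with finite-dimensionality handling the compactness in (a) and weak compactness plus the compact embedding $H^1_0(\Omega)\hookrightarrow L^2(\Omega)$ handling it in (b). The paper states this proposition without proof (it is presented as a standard consequence of those three facts), so there is no written argument to compare against; your argument is the natural one and fills the gap cleanly.
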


In what follows, we denote by $$A:W^{1,p}_{0}(\Omega)\rightarrow W^{-1,p'}(\Omega)=W^{1,p}_{0}(\Omega)^*\quad\left(\frac{1}{p}+\frac{1}{p'}=1\right)$$  the nonlinear map corresponding to the $p$-Laplace differential operator and defined by
\begin{equation}\label{eq6}
	\left\langle A(u),v\right\rangle=\int_{\Omega}|Du|^{p-2}(Du,Dv)_{\RR^N}dz\ \mbox{for all}\ u,v\in W^{1,p}_{0}(\Omega).
\end{equation}

From Papageorgiou \& Kyritsi \cite[p. 314]{17}, we have:
\begin{prop}\label{prop5}
	The operator $A:W^{1,p}_{0}(\Omega)\rightarrow W^{-1,p'}(\Omega)$ defined by (\ref{eq6}) is continuous, strictly monotone (hence maximal monotone, too) and of type $(S)_+$, that is,
	$$``u_n\stackrel{w}{\rightarrow}u\ \mbox{in}\ W^{1,p}_{0}(\Omega)\ \mbox{and}\ \limsup\limits_{n\rightarrow\infty}\left\langle A(u_n),u_n-u\right\rangle\leq 0\Rightarrow u_n\rightarrow u\ \mbox{in}\ W^{1,p}_{0}(\Omega)".$$
\end{prop}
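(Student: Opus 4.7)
The assertion splits into four pieces — continuity, strict monotonicity, maximal monotonicity, and the $(S)_+$ property — with only the last requiring real work; the first three are classical facts. For continuity, take $u_n\to u$ in $W^{1,p}_{0}(\Omega)$, so that $Du_n\to Du$ in $L^p(\Omega;\RR^N)$. The map $\xi\mapsto|\xi|^{p-2}\xi$ is continuous on $\RR^N$ with growth $||\xi|^{p-2}\xi|\leq|\xi|^{p-1}$, and since $(p-1)p'=p$, the standard Nemytskii theorem yields $|Du_n|^{p-2}Du_n\to|Du|^{p-2}Du$ in $L^{p'}(\Omega;\RR^N)$; H\"older's inequality applied to (\ref{eq6}) then gives $A(u_n)\to A(u)$ in $W^{-1,p'}(\Omega)$.

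For strict monotonicity I would invoke the elementary pointwise inequality
$$\bigl(|\xi|^{p-2}\xi-|\eta|^{p-2}\eta,\,\xi-\eta\bigr)_{\RR^N}>0\quad\text{whenever }\xi\neq\eta\text{ in }\RR^N,$$
integrate over $\Omega$, and note that $u\neq v$ in $W^{1,p}_{0}(\Omega)$ forces $Du\neq Dv$ on a set of positive measure (by the Poincar\'e inequality). Maximal monotonicity is then automatic: a continuous, everywhere defined monotone operator on the reflexive space $W^{1,p}_{0}(\Omega)$ is maximal monotone by Minty's theorem.

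For the $(S)_+$ property, assume $u_n\stackrel{w}{\rightarrow}u$ in $W^{1,p}_{0}(\Omega)$ and $\limsup\langle A(u_n),u_n-u\rangle\leq 0$. Writing
$$\langle A(u_n),u_n-u\rangle=||Du_n||_p^p-\int_\Omega|Du_n|^{p-2}(Du_n,Du)_{\RR^N}\,dz$$
and bounding the integral by H\"older's inequality, I obtain
$$||Du_n||_p^p\leq||Du_n||_p^{p-1}||Du||_p+\langle A(u_n),u_n-u\rangle.$$
The case $u=0$ is immediate (the hypothesis then forces $||Du_n||_p\to 0$), so I may assume $||Du||_p>0$; by weak lower semicontinuity $\liminf||Du_n||_p\geq ||Du||_p>0$, so dividing through by $||Du_n||_p^{p-1}$ (eventually positive) and passing to $\limsup$ yields $\limsup||Du_n||_p\leq||Du||_p$. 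Combined with the same lower-semicontinuity bound this produces $||Du_n||_p\to||Du||_p$. Uniform convexity of $L^p(\Omega;\RR^N)$ paired with the weak convergence $Du_n\stackrel{w}{\rightarrow}Du$ then upgrades this to strong convergence in $L^p$, i.e.\ $u_n\to u$ in $W^{1,p}_{0}(\Omega)$.

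The main obstacle I expect is establishing the strict pointwise inequality in the range $1<p<2$, where the naive convexity argument fails and one must use the refined Simon-type estimate $\bigl(|\xi|^{p-2}\xi-|\eta|^{p-2}\eta,\xi-\eta\bigr)\geq c|\xi-\eta|^2(|\xi|+|\eta|)^{p-2}$; the $(S)_+$ conclusion itself, by contrast, follows uniformly in $p$ via the H\"older-plus-uniform-convexity route above, which is why I would route the argument that way rather than through a Clarkson-type inequality tailored to a specific range of $p$.
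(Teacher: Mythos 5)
The paper does not prove Proposition~\ref{prop5}; it is cited verbatim from Papageorgiou \& Kyritsi \cite[p.~314]{17}, so there is no in-text argument to compare against. Your self-contained proof is correct. The continuity step via the Nemytskii theorem (using $(p-1)p'=p$) and the H\"older estimate on \eqref{eq6} is the standard one. For strict monotonicity, integrating the strict pointwise inequality and invoking Poincar\'e to pass from $u\neq v$ to $Du\neq Dv$ on a set of positive measure is exactly right, and Minty's theorem closes the maximal monotonicity gap. The $(S)_+$ argument is also sound: from
$$\|Du_n\|_p\leq\|Du\|_p+\frac{\langle A(u_n),u_n-u\rangle}{\|Du_n\|_p^{p-1}},$$
the boundedness of $\{u_n\}$ (weak convergence) and $\liminf\|Du_n\|_p\geq\|Du\|_p>0$ make the denominator eventually bounded away from zero, so the last term has nonpositive $\limsup$ and $\|Du_n\|_p\to\|Du\|_p$; the Radon--Riesz property of the uniformly convex space $L^p(\Omega;\RR^N)$ then converts weak plus norm convergence of $Du_n$ into strong convergence. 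One small remark on your closing worry: the strict pointwise inequality you need for $1<p<2$ does not actually require the quantitative Simon estimate you quote (which gives uniform monotonicity on bounded sets); strict monotonicity follows already from strict convexity of $\xi\mapsto|\xi|^p/p$ for every $p>1$, since for a $C^1$ strictly convex function $\phi$ one has $(\nabla\phi(\xi)-\nabla\phi(\eta),\xi-\eta)>0$ whenever $\xi\neq\eta$. So the "obstacle" you anticipate dissolves with a cheaper observation, though your proposed tool would of course also suffice.
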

	
	As before, let $X$ be a Banach space, $\varphi\in C^1(X)$, and let $c\in \RR$. We introduce the following sets:
	$$K_{\varphi}=\{u\in X:\varphi'(u)=0\},\ K^{c}_{\varphi}=\{u\in K_{\varphi}:\varphi(u)=c\},\ \varphi^c=\{u\in X:\varphi(u)\leq c\}.$$
	
	Let $(Y_1,Y_2)$ be a topological pair such that $Y_2\subseteq Y_1\subseteq X$ and let $k$ be a positive integer.  We denote by $H_k(Y_1,Y_2)$ the $k$th-relative singular homology group of the topological pair $(Y_1,Y_2)$ with integer coefficients. The critical groups of $\varphi$ at an isolated $u\in K^{c}_{\varphi}$, are defined by
$$C_k(\varphi,u)=H_k(\varphi^c\cap U,\varphi^c\cap U\backslash\{u\})\ \mbox{for all}\ k,$$
with $U$ being a neighborhood of $u$ such that $K_{\varphi}\cap\varphi^c\cap U=\{u\}$. The excision property of singular homology, implies that the above definition of critical groups is independent of the choice of the neighborhood $U$ of $u$.

Suppose that $\varphi$ satisfies the $PS$-condition and $-\infty<\inf\varphi(K_{\varphi})$. Let $c<\inf\varphi(K_{\varphi})$. The critical groups of $\varphi$ at infinity are defined by
$$C_k(\varphi,\infty)=H_k(X,\varphi^c)\ \mbox{for all}\ k.$$

The second deformation theorem (see, for example, Gasinski \& Papageorgiou \cite[p. 628]{10}) implies that the above definition of critical groups at infinity is independent of the choice of the level $c<\inf\varphi(K_{\varphi})$.

Suppose that $\varphi\in C^1(X)$ satisfies the $PS$-condition and $K_{\varphi}$ is finite. We define
\begin{eqnarray*}
	&&M(t,u)=\sum_{k\geq 0}\mbox{rank}\, C_k(\varphi,u)t^k\ \mbox{for all}\ t\in\RR,\  u\in K_{\varphi},\\
	&&P(t,\infty)=\sum_{k\geq 0}\mbox{rank}\, C_k(\varphi,\infty)t^k\ \mbox{for all}\ t\in\RR.
\end{eqnarray*}	

The Morse relation says that
\begin{equation}\label{eq7}
	\sum_{u\in K_{\varphi}}M(t,u)=P(t,\infty)+(1+t)Q(t),
\end{equation}
where $Q(t)={\underset{\mathrm{k\geq 0}}\sum}\beta_kt^k$ is a formal series in $t\in\RR$, with nonnegative integer coefficients.

Finally, if $x\in\RR$, we set $x^{\pm}=\max\{\pm x,0\}$. Then for $u\in W^{1,p}_{0}(\Omega)$, we set $u^{\pm}(\cdot)=u(\cdot)^{\pm}$. We know that
$$u^{\pm}\in W^{1,p}_{0}(\Omega),\ u=u^+-u^-,\ |u|=u^++u^-.$$

Also, if $h:\Omega\times\RR\rightarrow\RR$ is a measurable function (for example, a Carath\'eodory function), then we define
$$N_h(u)(\cdot)=h(\cdot,u(\cdot))\ \mbox{for all}\ u\in W^{1,p}_{0}(\Omega)$$
(the Nemytskii map corresponding to $h$). Note that $z\mapsto N_h(u)(z)$ is measurable. We denote by $|\cdot|_N$ the Lebesgue measure on $\RR^N$.

\section{The nonlinear equation $(1<p<\infty)$}

In this section we deal with the general equation (\ref{eq1}) and prove two multiplicity theorems producing three nontrivial solutions, all with sign information. The two multiplicity theorems differ in the geometry near the origin. In the first one, the reaction is $(p-1)$-sublinear near zero, while in the second, it is $(p-1)$-superlinear (we have the presence of a concave term).

For the first multiplicity theorem, we start with the following hypotheses on the reaction $f(z,x)$. Using them, we will generate two nontrivial constant sign solutions:

\smallskip
$H_1:$ $f:\Omega\times\RR\rightarrow\RR$ is a measurable function such that for almost all $z\in \Omega$, $f(z,0)=0$, $f(z,\cdot)$ is locally $\alpha$-H\"{o}lder continuous with $\alpha\in\left(0,1\right]$ and local H\"{o}lder constant $k\in L^{\infty}(\Omega)_+$ and
\begin{itemize}
	\item[(i)] for every $\rho>0$, there exists $a_{\rho}\in L^{\infty}(\Omega)$ such that
	$$|f(z,x)|\leq a_{\rho}(z)\ \mbox{for almost all}\ z\in\Omega,\ \mbox{and all}\ |x|\leq\rho;$$
	\item[(ii)] $\limsup\limits_{x\rightarrow\pm \infty}\frac{f(z,x)}{|x|^{p-2}x}\leq\xi<\hat{\lambda}_1$ uniformly for almost all $z\in\Omega$;
	\item[(iii)] there exists a function $\eta\in L^{\infty}(\Omega)$ such that
	\begin{eqnarray*}
		&&\eta(z)\geq\hat{\lambda}_1\ \mbox{for almost all}\ z\in\Omega,\ \mbox{the inequality is strict on a set of positive measure,}\\
		&&\liminf\limits_{x\rightarrow 0}\frac{f(z,x)}{|x|^{p-2}x}\geq\eta(z)\ \mbox{uniformly for almost all}\ z\in\Omega;
	\end{eqnarray*}
\item[(iv)] there exists $M_0>0$ such that for almost all $z\in\Omega$,
\begin{eqnarray*}
		&& x\mapsto\frac{f(z,x)}{x^{p-1}}\ \mbox{is nondecreasing on $[M_0,+\infty)$};\\
		&&x\mapsto\frac{f(z,x)}{|x|^{p-2}x}\ \mbox{is nonincreasing on $(-\infty, -M_0]$}.
	\end{eqnarray*}
\end{itemize}
\begin{remark}
	We stress that the above conditions do not impose any global growth condition from below on the reaction $f(z,\cdot)$.
\end{remark}

Hypothesis $H_1(ii)$ implies that we can find $\xi_1\in(\xi,\hat{\lambda}_1)$ and $M\geq M_0$ such that
\begin{equation}\label{eq8}
	f(z,x)x\leq\xi_1|x|^p\ \mbox{for almost all}\ z\in\Omega,\ \mbox{and all}\ |x|\geq M.
\end{equation}

Also, let $\beta\in L^{\infty}(\Omega)_+$ such that
\begin{equation}\label{eq9}
	\beta(z)\geq a_M(z)+1\ \mbox{for almost all}\ z\in\Omega\ (\mbox{see hypothesis}\ H_1(i)).
\end{equation}

Let $\{t_n\}_{n\geq 1}\subseteq\left[1,+\infty\right)$ and assume that $t_n\rightarrow+\infty$. We define
$$h_n(z)=\left\{\begin{array}{ll}
	\hat{\lambda}_1(t_n\hat{u}_1(z))^{p-1}&\mbox{if}\ z\in\{t_n\hat{u}_1> M\}\\
	t_n^{p-1}\beta(z)&\mbox{if}\ z\in\{t_n\hat{u}_1\leq M\}.
\end{array}\right.$$

Evidently, $h_n\in L^{\infty}(\Omega)$ for all $n\geq 1$. Recall that $\hat{u}_1\in {\rm int}\,C_+$. Hence $\{t_n\hat{u}_1\leq M\}\downarrow\emptyset$ as $n\rightarrow\infty$. So, for every $r\in\left[1,\infty\right)$ we have
\begin{equation}\label{eq10}
	\left\|\frac{h_n}{t_n^{p-1}}-\hat{\lambda}_1\hat{u}_1^{p-1}\right\|_r\rightarrow 0\ \mbox{as}\ n\rightarrow\infty.
\end{equation}

On the other hand, by Gasinski \& Papageorgiou \cite[p. 477]{11}, we know that
\begin{equation}\label{eq11}
\left\|\frac{h_n}{t_n^{p-1}}-\hat{\lambda}_1\hat{u}_1^{p-1}\right\|_r\rightarrow
\left\|\frac{h_n}{t_n^{p-1}}-\hat{\lambda}_1\hat{u}_1^{p-1}\right\|_{\infty}\ \mbox{as}\ r\rightarrow\infty,\ \mbox{for every}\ n\geq 1.
\end{equation}

Then from (\ref{eq11}) we see that given $\epsilon>0$, we can find $r_0=r_0(\epsilon)\in\NN$ such that
\begin{equation}\label{eq12}
	\left\|\frac{h_n}{t_n^{p-1}}-\hat{\lambda}_1\hat{u}_1^{p-1}\right\|_{\infty}\leq
\left\|\frac{h_n}{t_n^{p-1}}-\hat{\lambda}_1\hat{u}_1^{p-1}\right\|_r+\frac{\epsilon}{2}\ \mbox{for all}\ r\geq r_0.
\end{equation}

Fix $r\geq r_0$. From (\ref{eq10}) we see that we can find $n_0=n_0(\epsilon)\in\NN$ such that
\begin{equation}\label{eq13}
	\left\|\frac{h_n}{t_n^{p-1}}-\hat{\lambda}_1\hat{u}_1^{p-1}\right\|_{r}\leq\frac{\epsilon}{2}\ \mbox{for all}\ n\geq n_0.
\end{equation}

For the fixed $r\geq r_0$, using (\ref{eq13}) in (\ref{eq12}), we obtain
\begin{eqnarray}\label{eq14}
	&&\left\|\frac{h_n}{t_n^{p-1}}-\hat{\lambda}_1\hat{u}_1^{p-1}\right\|_{\infty}\leq\epsilon\ \mbox{for all}\ n\geq n_0,\nonumber\\
	&\Rightarrow&\frac{h_n}{t_n^{p-1}}\rightarrow\hat{\lambda}_1\hat{u}_1^{p-1}\ \mbox{in}\ L^{\infty}(\Omega)\ \mbox{as}\ n\rightarrow\infty.
\end{eqnarray}

Then for every $n\geq 1$, we consider the following auxiliary Dirichlet problem
$$-\Delta_pu_n(z)=h_n(z)\ \mbox{in}\ \Omega,\ u_n|_{\partial\Omega}=0.$$

This problem has a unique solution $u_n\in W^{1,p}_{0}(\Omega),\ u_n\geq 0$. The nonlinear regularity theory and the nonlinear maximum principle (see \cite[pp. 737-738]{10}), imply that $u_n\in {\rm int}\,C_+$ for all $n\geq 1$. Let $v_n=\frac{u_n}{t_n}$ for all $n\geq 1$. We have
$$-\Delta_pv_n(z)=\frac{h_n(z)}{t_n^{p-1}}\ \mbox{in}\ \Omega,\ v_n|_{\partial\Omega}=0.$$

From Gasinski \& Papageorgiou \cite[p. 738]{10}, we know that we can find $\theta\in(0,1)$ and $M_1>0$ such that
\begin{equation}\label{eq15}
	v_n\in C^{1,\theta}_{0}(\overline{\Omega})\ \mbox{and}\ ||v_n||_{C^{1,\theta}_{0}(\overline{\Omega})}\leq M_1\ \mbox{for all}\ n\geq 1.
\end{equation}

Exploiting the compact embedding of $C^{1,\theta}_{0}(\overline{\Omega})$ into $C^1_0(\overline{\Omega})$ and using (\ref{eq14}),  we can infer from (\ref{eq15}) that
\begin{equation}\label{eq16}
	v_n\rightarrow \hat{u}_1\ \mbox{in}\ C^1_0(\overline{\Omega})\ \mbox{as}\ n\rightarrow\infty\,.
\end{equation}

Hence by (\ref{eq16}), we can find $n_1\in\NN$ such that
\begin{equation}\label{eq17}
	\xi_1v_n(z)^{p-1}\leq\hat{\lambda}_1\hat{u}_1(z)^{p-1}\ \mbox{for all}\ z\in\overline{\Omega},\  n\geq n_1
\end{equation}
and if $t_n\hat{u}_1(z)>M$, then $t_nv_n(z)>M$ for all $n\geq n_1$.

Also, by (\ref{eq16}) and our hypothesis on $f(z,\cdot)$, we can find $n_2\in\NN$ such that
\begin{eqnarray}\label{eq18}
	&&|f(z,\hat{u}_1(z))-f(z,v_n(z))|\leq||k||_{\infty}||\hat{u}_1-v_n||^{\alpha}_{\infty}\leq 1\\
	&&\hspace{4.3cm}\mbox{for all}\ z\in\Omega,\ n\geq n_2.\nonumber
\end{eqnarray}

Let $n_0=\max\{n_1,n_2\}$. Then for $n\geq n_0$ we have:
\begin{eqnarray*}
	&&\mbox{If}\ z\in\{t_n\hat{u}_1> M\},\ \mbox{then}\\
	 &&-\Delta_p(t_nv_n)(z)=\hat{\lambda}_1((t_nu_n(z))^{p-1}\geq\xi_1(t_nv_n(z))^{p-1}\geq f(z,t_nv_n(z))\ \mbox{(see (\ref{eq8}) and (\ref{eq17}))},\\
&&\Longrightarrow -\Delta_pv_n(z)\geq f(z,v_n(z))\ \mbox{(see hypotheses $H_1(iv)$ and recall $t_n\geq 1$)}.\\
	&&\mbox{If}\ z\in\{t_n\hat{u}_1\leq M\},\ \mbox{then}\\
	&&-\Delta_pv_n(z)=\frac{h(z)}{t_n^{p-1}}=\beta(z)\geq f(z,\hat{u}_1(z))+1\geq f(z,v_n(z))\ (\mbox{see (\ref{eq9}) and (\ref{eq18})}).
\end{eqnarray*}

So, fixing $n\geq n_0$ and setting $\bar{u}=v_n\in {\rm int}\,C_+$, we have
\begin{equation}\label{eq19}
	-\Delta_p\bar{u}(z)\geq f(z,\bar{u}(z))\ \mbox{for almost all}\ z\in\Omega.
\end{equation}

In a similar fashion, we produce $\bar{v}\in-{\rm int}\,C_+$ such that
\begin{equation}\label{eq20}
	-\Delta_p\bar{v}(z)\leq f(z,\bar{v}(z))\ \mbox{for almost all}\ z\in\Omega.
\end{equation}

Now, we are ready to produce nontrivial constant sign solutions for problem (\ref{eq1}).
\begin{prop}\label{prop6}
	Assume that hypotheses $H_1$ hold. Then problem (\ref{eq1}) admits at least two constant sign solutions
	$$u_0\in[0,\bar{u}]\cap {\rm int}\,C_+\ \mbox{and}\ v_0\in[\bar{v},0]\cap(-{\rm int}\,C_+)$$
	(here $[0,\bar{u}]=\{u\in W^{1,p}_{0}(\Omega):0\leq u(z)\leq\bar{u}(z)\ \mbox{for almost all}\ z\in\Omega\}$ and $[\bar{v},0]=\{u\in W^{1,p}_{0}(\Omega):\bar{v}(z)\leq u(z)\leq 0\ \mbox{for almost all}\ z\in\Omega\}$).
\end{prop}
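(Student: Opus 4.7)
The plan is a classical truncation-and-minimization argument exploiting the supersolution $\bar u$ constructed above. Define the positive truncation
$$\hat f_+(z,x)=\begin{cases} 0 & \text{if } x<0,\\ f(z,x) & \text{if } 0\le x\le \bar u(z),\\ f(z,\bar u(z)) & \text{if } x>\bar u(z),\end{cases}$$
set $\hat F_+(z,x)=\int_0^x\hat f_+(z,s)\,ds$, and consider
$$\hat\varphi_+(u)=\frac{1}{p}\|Du\|_p^p-\int_\Omega \hat F_+(z,u)\,dz\in C^1(W^{1,p}_0(\Omega)).$$
Since $\bar u\in L^\infty(\Omega)$ and $H_1(i)$ gives local boundedness of $f(z,\cdot)$, $\hat f_+$ is in $L^\infty(\Omega\times\RR)$; hence $\hat\varphi_+$ is coercive and sequentially weakly lower semicontinuous, and admits a global minimizer $u_0\in W^{1,p}_0(\Omega)$.

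The key step is to show $u_0\neq 0$, and here hypothesis $H_1(iii)$ enters. Given $\varepsilon>0$, choose $\delta>0$ so that
$$F(z,x)\ge \frac{1}{p}(\eta(z)-\varepsilon)\,x^p\quad\text{for a.a. } z\in\Omega,\ 0\le x\le\delta.$$
Since $\hat u_1\in\operatorname{int}\,C_+$ and $\bar u\in\operatorname{int}\,C_+$, for all sufficiently small $t>0$ we have $t\hat u_1\le\min\{\delta,\bar u\}$; recalling $\|\hat u_1\|_p=1$ and $\|D\hat u_1\|_p^p=\hat\lambda_1$, we obtain
$$\hat\varphi_+(t\hat u_1)\le \frac{t^p}{p}\Bigl[\hat\lambda_1-\int_\Omega \eta(z)\hat u_1^p\,dz+\varepsilon\Bigr].$$
The bracket is strictly negative for $\varepsilon$ small because $\eta\ge\hat\lambda_1$ with strict inequality on a set of positive measure and $\hat u_1>0$ in $\Omega$. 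Thus $\hat\varphi_+(u_0)\le \hat\varphi_+(t\hat u_1)<0=\hat\varphi_+(0)$, so $u_0\ne 0$.

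Next, the inclusion $u_0\in[0,\bar u]$ follows from the Euler equation $\langle A(u_0),v\rangle=\int_\Omega \hat f_+(z,u_0)v\,dz$. Testing with $v=-u_0^-\in W^{1,p}_0(\Omega)$ gives $\|Du_0^-\|_p^p=0$, so $u_0\ge 0$. Testing with $v=(u_0-\bar u)^+$ and using that $\hat f_+(z,u_0)=f(z,\bar u)$ on $\{u_0>\bar u\}$ together with the supersolution inequality (\ref{eq19}) yields
$$\langle A(u_0)-A(\bar u),(u_0-\bar u)^+\rangle\le\int_\Omega [f(z,\bar u)-f(z,\bar u)](u_0-\bar u)^+\,dz=0,$$
and strict monotonicity of $A$ (Proposition \ref{prop5}) forces $(u_0-\bar u)^+=0$. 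Therefore $u_0\in[0,\bar u]$ and, on this interval, $\hat f_+(z,u_0)=f(z,u_0)$, so $u_0$ solves (\ref{eq1}). Nonlinear regularity places $u_0\in C_+\setminus\{0\}$, and the nonlinear maximum principle (\cite[pp.\,737--738]{10}) upgrades this to $u_0\in\operatorname{int}\,C_+$.

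The negative solution $v_0\in[\bar v,0]\cap(-\operatorname{int}\,C_+)$ is obtained by an entirely symmetric argument: truncate $f$ at $\bar v$ from below and at $0$ from above, minimize the corresponding functional, use $H_1(iii)$ with test function $-t\hat u_1$ for nontriviality, and use (\ref{eq20}) together with Proposition \ref{prop5} to confine the minimizer to $[\bar v,0]$. The main technical point to handle with care is the strict-monotonicity comparison that pins $u_0$ beneath $\bar u$ (and $v_0$ above $\bar v$), since it is this step that turns the truncation into a genuine solution of the original problem; the nontriviality estimate is straightforward once the normalization $\|\hat u_1\|_p=1$ and the strict part of $H_1(iii)$ are used.
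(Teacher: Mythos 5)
Your proposal is correct and follows essentially the same route as the paper: the same truncation $\hat f_+$, the same coercive functional $\hat\varphi_+$, the same nontriviality estimate via $t\hat u_1$ and the strict part of $H_1(iii)$, the same test functions $-u_0^-$ and $(u_0-\bar u)^+$ with strict monotonicity of $A$, and regularity plus the nonlinear maximum principle to land in $\mathrm{int}\,C_+$. One cosmetic remark: the displayed inequality $\langle A(u_0)-A(\bar u),(u_0-\bar u)^+\rangle\le\int_\Omega[f(z,\bar u)-f(z,\bar u)](u_0-\bar u)^+\,dz$ is mis-stated as written (the right-hand comparison comes from the weak supersolution inequality $\int_\Omega f(z,\bar u)(u_0-\bar u)^+\,dz\le\langle A(\bar u),(u_0-\bar u)^+\rangle$, not from cancelling identical integrands), but the conclusion $(u_0-\bar u)^+=0$ is correct and the intended argument matches the paper's.
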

\begin{proof}
	First, we produce the positive solution. To this end, we consider the following truncation of $f(z,\cdot)$:
	\begin{eqnarray}\label{eq21}
		\hat{f}_+(z,x)=\left\{\begin{array}{ll}
			0&\mbox{if}\ x<0\\
			f(z,x)&\mbox{if}\ 0\leq x\leq \bar{u}(z)\\
			f(z,\bar{u}(z))&\mbox{if}\ \bar{u}(z)<x.
		\end{array}\right.
	\end{eqnarray}
	
	This is a Carath\'eodory function. We set $\hat{F}_+(z,x)=\int^{x}_{0}\hat{f}_+(z,s)ds$ and consider the $C^1$-functional $\hat{\varphi}_+:W^{1,p}_{0}(\Omega)\rightarrow\RR$ defined by
	$$\hat{\varphi}_+(u)=\frac{1}{p}||Du||^p_p-\int_{\Omega}\hat{F}_+(z,u(z))dz\ \mbox{for all}\ u\in W^{1,p}_{0}(\Omega).$$
	
	From (\ref{eq21}) it is clear that $\hat{\varphi}_+$ is coercive. Also, using the Sobolev embedding theorem, we can easily check that $\hat{\varphi}_+$ is sequentially weakly lower semicontinuous. So, by the Weierstrass theorem, we can find $u_0\in W^{1,p}_{0}(\Omega)$ such that
	\begin{equation}\label{eq22}
		\hat{\varphi}_+(u_0)=\inf\{\hat{\varphi}_+(u):u\in W^{1,p}_{0}(\Omega)\}=\hat{m}_+.
	\end{equation}
	
	By virtue of hypothesis $H_1(iii)$, given $\epsilon>0$, we can find $\delta=\delta(\epsilon)>0$ such that
	\begin{equation}\label{eq23}
		F(z,x)\geq\frac{1}{p}(\eta(z)-\epsilon)x^p\ \mbox{for almost all}\ z\in\Omega,\ \mbox{and all}\ x\in[0,\delta].
	\end{equation}
	
	Here, $F(z,x)=\int^{x}_{0}f(z,s)ds$. Since $\hat{u}_1\in {\rm int}\,C_+$, we can find small enough $t\in(0,1)$ so that $t\hat{u}_1(z)\in[0,\delta]$ for all $z\in\overline{\Omega}$. We have
	\begin{eqnarray}\label{eq24}
		 \hat{\varphi}_+(t\hat{u}_1)&\leq&\frac{t^p}{p}\hat{\lambda}_1||\hat{u}_1||^p_p-\frac{t^p}{p}\int_{\Omega}\eta(z)\hat{u}_1^pdz+\frac{t^p}{p}\epsilon\ (\mbox{recall}\ ||\hat{u}_1||_p=1\ \mbox{and see (\ref{eq23})})\nonumber\\
		 &=&\frac{t^p}{p}\left[\int_{\Omega}(\hat{\lambda}_1-\eta(z))\hat{u}_1^pdz+\epsilon\right].
	\end{eqnarray}
	
	Note that
	$$\xi_0=\int_{\Omega}(\eta(z)-\hat{\lambda}_1)\hat{u}_1^pdz>0.$$
	
	So, if we choose $\epsilon\in(0,\xi_0)$, then from (\ref{eq24}) we see that
	\begin{eqnarray*}
		&&\hat{\varphi}_+(t\hat{u}_1)<0,\\
		&\Rightarrow&\hat{\varphi}_+(u_0)<0=\hat{\varphi}_+(0)\ \mbox{(see (\ref{eq22}), hence)}\ u_0\neq 0.
	\end{eqnarray*}
	
	From (\ref{eq22}) we have
	\begin{eqnarray}\label{eq25}
		&&\hat{\varphi}'_+(u_0)=0,\nonumber\\
		&\Rightarrow&A(u_0)=N_{\hat{f}_+}(u_0).
	\end{eqnarray}
	
	On (\ref{eq25})  we first act with $-u_0^-\in W^{1,p}_{0}(\Omega)$. We obtain
	\begin{eqnarray*}
		&&||Du_0^-||^p_p=0\ (\mbox{see (\ref{eq21})}),\\
		&\Rightarrow&u_0\geq 0,\ u_0\neq 0.
	\end{eqnarray*}
	
	Then  we act on (\ref{eq25}) with $(u_0-\bar{u})^+\in W^{1,p}_{0}(\Omega)$. We have
	\begin{eqnarray*}
		&&\left\langle A(u_0),(u_0-\bar{u})^+\right\rangle=\int_{\Omega}\hat{f}_+(z,u_0)(u_0-\bar{u})^+dz\\
		&&\hspace{3.2cm}=\int_{\Omega}f(z,\bar{u})(u_0-\bar{u})^+dz\ (\mbox{see (\ref{eq21})})\\
		&&\hspace{3.2cm}\leq\left\langle A(\bar{u}),(u_0-\bar{u})^+\right\rangle\ (\mbox{see (\ref{eq19})}),\\
		 &\Rightarrow&\int_{\{u_0>\bar{u}\}}(|Du_0|^{p-2}Du_0-|D\bar{u}|^{p-2}D\bar{u},Du_0-D\bar{u})_{\RR^N}dz\leq 0,\\
		&\Rightarrow&|\{u_0>\bar{u}\}|_N=0,\ \mbox{hence}\ u_0\leq\bar{u}.
	\end{eqnarray*}
	
	So, we have proved that
	\begin{equation}\label{eq26}
		u_0\in [0,\bar{u}],\ u_0\neq 0.
	\end{equation}
	
	Then (\ref{eq25}) becomes
	\begin{eqnarray}\label{eq27}
		&&A(u_0)=N_f(u_0)\ (\mbox{see (\ref{eq21}) and (\ref{eq26})}),\nonumber\\
		&\Rightarrow&-\Delta_pu_0(z)=f(z,u_0(z))\ \mbox{for almost all}\ z\in\Omega,\ u_0|_{\partial\Omega}=0.
	\end{eqnarray}
	
	The nonlinear regularity theory (see \cite[pp. 737-738]{10}) implies that $u_0\in C_+\backslash\{0\}$. Let $\rho=||u_0||_{\infty}$. Hypotheses $H_1(i), (iii)$ imply that we can find $\hat{\xi}_{\rho}>0$ such that
	\begin{equation}\label{eq28}
		f(z,x)+\hat{\xi}_{\rho}x^{p-1}\geq 0\ \mbox{for almost all}\ z\in \Omega,\ \mbox{and all}\ x\in[0,\rho].
	\end{equation}
	
	Then from (\ref{eq27}) and (\ref{eq28}), we have
	\begin{eqnarray*}
		&&\Delta_pu_0(z)\leq\hat{\xi}_{\rho}u_0(z)^{p-1}\ \mbox{for almost all}\ z\in\Omega,\\
		&\Rightarrow&u_0\in {\rm int}\,C_+\ (\mbox{by the nonlinear maximum principle, see \cite[p. 738]{9}}).
	\end{eqnarray*}
	
	Similarly, for the negative solution, we introduce the truncation
	\begin{eqnarray}\label{eq29}
		\hat{f}_-(z,x)=\left\{\begin{array}{ll}
			f(z,\bar{v}(z))&\mbox{if}\ x<\bar{v}(z)\\
			f(z,x)&\mbox{if}\ \bar{v}(z)\leq x\leq 0\\
			0&\mbox{if}\ 0<x.
		\end{array}\right.
	\end{eqnarray}
	
	This is a Carath\'eodory function. We set $\hat{F}_-(z,x)=\int^{x}_{0}\hat{f}_-(z,s)ds$ and consider the $C^1$-functional $\hat{\varphi}_-:W^{1,p}_{0}(\Omega)\rightarrow\RR$ defined by
	$$\hat{\varphi}_-(u)=\frac{1}{p}||Du||^p_p-\int_{\Omega}\hat{F}_-(z,u(z))dz\ \mbox{for all}\ u\in W^{1,p}_{0}(\Omega).$$
	
	Working with $\hat{\varphi}_-$ as above, via the direct method and using \eqref{eq20}, we produce a negative solution
	$$v_0\in[\bar{v},0]\cap(-{\rm int}\,C_+).$$ The proof is now complete.
\end{proof}

In fact, we can produce extremal constant sign solutions, that is, a smallest positive and a biggest negative solutions. These extremal solutions will be helpful in obtaining nodal ones.
\begin{prop}\label{prop7}
	Assume that hypotheses $H_1$ hold. Then  problem (\ref{eq1}) admits a smallest positive solution $u_*\in {\rm int}\, C_+$ and a biggest negative solution $v_*\in-{\rm int}\,C_+.$
\end{prop}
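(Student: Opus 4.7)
Let $S_+=\{u\in W^{1,p}_0(\Omega):u\ \mbox{solves (\ref{eq1})},\ 0\leq u\leq\bar u\}$. By Proposition~\ref{prop6} together with the nonlinear regularity theory and the nonlinear maximum principle, $\emptyset\neq S_+\subseteq\mathrm{int}\,C_+$. The plan is to show (i) $S_+$ is downward directed, (ii) $S_+$ admits a uniform positive minorant $\underline u\in\mathrm{int}\,C_+$, and (iii) a decreasing sequence in $S_+$ converges strongly in $W^{1,p}_0(\Omega)$ to $u_*=\inf S_+\in\mathrm{int}\,C_+$. For (i), given $u_1,u_2\in S_+$ set $y=\min\{u_1,u_2\}$ and define the Carath\'eodory truncation
\begin{equation*}
\hat f_*(z,x)=\begin{cases}
0 & \mbox{if}\ x<0,\\
f(z,x) & \mbox{if}\ 0\leq x\leq y(z),\\
f(z,y(z)) & \mbox{if}\ x>y(z).
\end{cases}
\end{equation*}
The associated coercive and sequentially weakly lower semicontinuous $C^1$-functional $\hat\varphi_*$ attains its infimum at some $u$; testing the Euler equation with $-u^-$ and with $(u-y)^+$ (using that $u_1,u_2$ solve (\ref{eq1}), so $y$ is a pointwise supersolution of the truncated problem on $\{u>y\}$) gives $0\leq u\leq y$; estimate (\ref{eq23})--(\ref{eq24}) via $H_1(iii)$ gives $\hat\varphi_*(t\hat u_1)<0$ for small $t>0$, so $u\neq 0$, $u\in S_+$ and $u\leq\min\{u_1,u_2\}$. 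The biggest negative solution $v_*$ is produced symmetrically using (\ref{eq29}) on $[\bar v,0]$.

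For (ii), hypothesis $H_1(iii)$ implies that the first weighted eigenvalue $\lambda_1(\eta)=\inf_{u\neq 0}\|Du\|_p^p/\int_\Omega\eta|u|^p dz$ of $-\Delta_p u=\lambda\eta u^{p-1}$ satisfies $\lambda_1(\eta)<1$ (test with $\hat u_1$: $\lambda_1(\eta)\leq\hat\lambda_1/\int\eta\hat u_1^p<1$). Let $e_1\in\mathrm{int}\,C_+$ be the corresponding positive eigenfunction. Choose $\epsilon\in(0,\hat\lambda_1(1-\lambda_1(\eta)))$, $\delta>0$ with $f(z,x)\geq(\eta(z)-\epsilon)x^{p-1}$ on $[0,\delta]$, and $s>0$ small enough that $\underline u:=se_1\leq\delta$ on $\overline\Omega$; then
\[
-\Delta_p\underline u=\lambda_1(\eta)\eta\underline u^{p-1}\leq(\eta-\epsilon)\underline u^{p-1}\leq f(z,\underline u)\ \mbox{a.e.,}
\]
so $\underline u$ is a weak subsolution of (\ref{eq1}). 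The uniform minorization $\underline u\leq u$ for every $u\in S_+$ is obtained via a bilateral truncation of $f$ at $[\min\{\underline u,u\},\max\{\underline u,u\}]$: the minimizer $w$ of the associated coercive functional sits in this interval (by the standard testing arguments combined with Proposition~\ref{prop5}) and solves (\ref{eq1}) away from the truncation; coupling this with (i), with the strict negativity $\hat\varphi(w)\leq\hat\varphi(\underline u)<0$ (so $w\neq 0$), and with a strict monotonicity argument from Proposition~\ref{prop5} on the hypothetical set $\{\underline u>u\}$ excludes that set.

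For (iii), the separability of $W^{1,p}_0(\Omega)$ and (i) (cf.~\cite[p.~178]{17}) produce a decreasing sequence $\{u_n\}\subseteq S_+$ with $u_n\downarrow u_*:=\inf S_+$ pointwise. The bounds $\underline u\leq u_n\leq\bar u$ and $H_1(i)$ make $\{u_n\}$ bounded in $W^{1,p}_0(\Omega)$ and $\{N_f(u_n)\}$ bounded in $L^\infty(\Omega)$; passing to a weak subsequence $u_n\stackrel{w}{\rightarrow}u_*$ and using $\langle A(u_n),u_n-u_*\rangle=\int_\Omega N_f(u_n)(u_n-u_*)\,dz\to 0$, the $(S)_+$-property (Proposition~\ref{prop5}) yields $u_n\to u_*$ strongly in $W^{1,p}_0(\Omega)$; the limit gives $A(u_*)=N_f(u_*)$, and $u_*\geq\underline u$ places $u_*\in S_+\cap\mathrm{int}\,C_+$ as the smallest element.

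The main obstacle is step (ii): the unilateral lower growth on $f$ invalidates the classical weak comparison principle (which demands monotonicity of $f(z,\cdot)$), so the common subsolution cannot be a scaled $\hat u_1$ and must instead be manufactured from the first eigenfunction of the weighted problem $-\Delta_p u=\eta u^{p-1}$; its universal domination by $S_+$ then has to be extracted through a delicate bilateral truncation combined with the strict monotonicity built into Proposition~\ref{prop5}.
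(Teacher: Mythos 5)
Your overall architecture (downward directedness of $S_+$, a uniform positive minorant, strong $W^{1,p}_0$-convergence of a decreasing sequence via the $(S)_+$-property) matches the paper's plan, and your step~(iii) is essentially the argument given there. Your explicit proof of step~(i) is a reasonable unpacking of the reference to Hu \& Papageorgiou that the paper cites without detail.

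The genuine gap is in step~(ii). You correctly observe that the classical comparison principle for the $p$-Laplacian requires a monotonicity structure on the reaction that $H_1$ does not provide, but your proposed fix does not close that gap. If you set $a=\min\{\underline u,u\}$ pointwise (with $\underline u = s e_1$ a subsolution and $u\in S_+$ a solution), then $a$ is \emph{not} a weak subsolution of the truncated problem: it is the minimum of a subsolution and a supersolution, and that combination carries no sub- or supersolution structure (heuristically, the distributional $-\Delta_p$ of the min generates the wrong sign at the interface $\{\underline u = u\}$). Consequently the standard test with $(a-w)^+$ fails to give $w\geq a$, and the ``bilateral truncation'' minimizer need not lie above $\min\{\underline u,u\}$. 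The ``strict monotonicity from Proposition~\ref{prop5}'' is monotonicity of the operator $A$ alone; it does not supply a comparison between a subsolution and a solution of a problem whose reaction is not monotone, which is exactly the obstacle you flag.

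The paper avoids this by never comparing a subsolution directly with a solution of \eqref{eq1}. Instead it introduces the auxiliary reaction $g_+(z,x)=(\eta(z)-\epsilon)x^{p-1}-c_1x^{r-1}$ ($r>p$), proves via the Filippakis--Kristaly--Papageorgiou/Arcoya--Ruiz strong comparison that problem \eqref{eq36} has a \emph{unique} positive solution $\tilde u\in{\rm int}\,C_+$, and then, for a fixed $u\in S_+\cap[0,\bar u]$, truncates \emph{the auxiliary reaction $g_+$} (not $f$) at the level $u$. Because of the lower estimate \eqref{eq33}, $u$ is a supersolution of that truncated auxiliary problem, which is what makes the testing argument with $(\tilde u_*-u)^+$ work; the nontrivial minimizer is then forced by uniqueness to equal $\tilde u$, yielding $\tilde u\leq u$. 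Your construction of a minorant from the weighted eigenproblem $-\Delta_p u=\lambda\eta(z)u^{p-1}$ produces a subsolution of \eqref{eq1}, but a subsolution alone cannot be pushed below an arbitrary element of $S_+$ without the detour through a monotone auxiliary problem with uniqueness. Replacing your step~(ii) with the auxiliary-problem argument would repair the proof.
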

\begin{proof}
	First we produce the smallest positive solution. Let $S_+$ be the set of positive solutions of problem (\ref{eq1}). From Proposition \ref{prop6} and its proof, we know that $S_+\cap[0,\bar{u}]\neq\varnothing$ and $S_+\subseteq {\rm int}\,C_+$. By Hu \& Papageorgiou \cite[p. 178]{12}, we know that we can find $\{u_n\}_{n\geq 1}\subseteq S_+\cap[0,\bar{u}]$ such that
	$$\inf S_+=\inf\limits_{n\geq 1} u_n.$$
	
	We have
	\begin{eqnarray}\label{eq30}
		&&A(u_n)=N_f(u_n), u_n\leq\bar{u}\ \mbox{for all}\ n\geq 1,\\
		&\Rightarrow&\{u_n\}_{n\geq 1}\subseteq W^{1,p}_{0}(\Omega)\ \mbox{is bounded}\ (\mbox{see hypothesis}\ H_1(i)).\nonumber
	\end{eqnarray}
	
	So, we may assume that
	\begin{equation}\label{eq31}
		u_n\stackrel{w}{\rightarrow}u_*\ \mbox{in}\ W^{1,p}_{0}(\Omega)\ \mbox{and}\ u_n\rightarrow u_*\ \mbox{in}\ L^p(\Omega).
	\end{equation}
	
	On (\ref{eq30}) we act with $u_n-u_*\in W^{1,p}_{0}(\Omega)$, pass to the limit as $n\rightarrow\infty$ and use (\ref{eq25}). Then
	\begin{eqnarray}\label{eq32}
		&&\lim\limits_{n\rightarrow\infty}\left\langle A(u_n),u_n-u_*\right\rangle=0,\nonumber\\
		&\Rightarrow&u_n\rightarrow u_*\ \mbox{in}\ W^{1,p}_{0}(\Omega)\ (\mbox{see Proposition \ref{prop5}}).
	\end{eqnarray}
	
	So, if in (\ref{eq30}) we pass to the limit as $n\rightarrow\infty$ and use (\ref{eq32}), then
	\begin{eqnarray*}
		&&A(u_*)=N_f(u_*),\\
		&\Rightarrow&u_*\ \mbox{is a nonnegative solution of (\ref{eq1}) and}\ u_*\in C_+\\
		&&\hspace{2cm}(\mbox{nonlinear regularity theory, see \cite[p. 738]{9}}).
	\end{eqnarray*}
	
	We need to show that $u_*\neq 0$. By virtue of hypotheses $H_1(i),(iii)$, given $\epsilon>0$, we can find $c_1=c_1(\epsilon)>0$ such that
	\begin{equation}\label{eq33}
		f(z,x)x\geq(\eta(z)-\epsilon)|x|^p-c_1|x|^r\ \mbox{for almost all}\ z\in\Omega,\ \mbox{and all}\ |x|\leq\rho,
	\end{equation}
	with $r>p$ and $\rho=\max\{||\bar{u}||_{\infty},||\bar{v}||_{\infty}\}$. We introduce the following Carath\'eodory functions
	\begin{eqnarray}
		&&g_+(z,x)=\left\{\begin{array}{ll}
			0&\mbox{if}\ x<0\\
			(\eta(z)-\epsilon)x^{p-1}-c_1x^{r-1}&\mbox{if}\ 0\leq x\leq \bar{u}(z)\\
			(\eta(z)-\epsilon)\bar{u}(z)^{p-1}-c_1\bar{u}(z)^{r-1}&\mbox{if}\ \bar{u}(z)<x
		\end{array}\right.\label{eq34}\\
		\mbox{and}&&g_-(z,x)=\left\{\begin{array}{ll}
			 (\eta(z)-\epsilon)|\bar{v}(z)|^{p-2}\bar{v}(z)-c_1|\bar{v}(z)|^{r-2}\bar{v}(z)&\mbox{if}\ x<\bar{v}(z)\\
			(\eta(z)-\epsilon)|x|^{p-2}x-c_1|x|^{r-2}x&\mbox{if}\ \bar{v}(z)\leq x\leq 0\\
			0&\mbox{if}\ 0<x.
		\end{array}\right.\label{eq35}
	\end{eqnarray}
	
	We consider the following auxiliary Dirichlet problems:
	\begin{eqnarray}
		&&-\Delta_pu(z)=g_+(z,u(z))\ \mbox{in}\ \Omega,\quad u|_{\partial\Omega}=0,\label{eq36}\\
		&&-\Delta_pv(z)=g_-(z,v(z))\ \mbox{in}\ \Omega,\quad v|_{\partial\Omega}=0.\label{eq37}
	\end{eqnarray}
	
	\begin{claim}\label{cl1}
		Problem (\ref{eq36}) (resp. Problem (\ref{eq37})) for $\epsilon>0$ small admits a unique positive solution $\tilde{u}\in {\rm int}\,C_+$ (resp. a unique negative solution $\tilde{v}\in-{\rm int}\, C_+$).
	\end{claim}
	
	First, we deal with problem (\ref{eq36}). So, let $\psi_+:W^{1,p}_{0}(\Omega)\rightarrow\RR$ be the $C^1$-functional defined by
	$$\psi_+(u)=\frac{1}{p}||Du||^p_p-\int_{\Omega}G_+(z,u(z))dz\ \mbox{for all}\ u\in W^{1,p}_{0}(\Omega),$$
	where $G_+(z,x)=\int^{x}_{0}g_+(z,s)ds$. From (\ref{eq34}) it is clear that $\psi_+$ is coercive. Also, it is sequentially weakly lower semicontinuous. So, we can find $\tilde{u}\in W^{1,p}_{0}(\Omega)$ such that
	\begin{equation}\label{eq38}
		\psi_+(\tilde{u})=\inf\{\psi_+(u):u\in W^{1,p}_{0}(\Omega)\}.
	\end{equation}
	
	Let $t\in(0,1)$ be small such that $t\hat{u}_1\leq\bar{u}$ (recall that $\bar{u}\in {\rm int}\,C_+$ and use Lemma 3.3 of Filippakis, Kristaly \& Papageorgiou \cite{8}). We have
	\begin{eqnarray*}
		 \psi_+(t\hat{u}_1)&\leq&\frac{t^p}{p}\hat{\lambda}_1+
\frac{c_1}{r}t^r||\hat{u}_1||^r_r-\frac{t^p}{p}\int_{\Omega}(\eta(z)-\epsilon)\hat{u}_1^pdz\ (\mbox{see (\ref{eq34})})\\
		 &=&\frac{t^p}{p}\left[\int_{\Omega}(\hat{\lambda}_1-(\eta(z)-\epsilon))\hat{u}_1^pdz\right]+\frac{c_1}{r}t^r||\hat{u}_1||^r_r.
	\end{eqnarray*}
	
	Note that $\beta=\int_{\Omega}(\eta(z)-\hat{\lambda}_1)\hat{u}_1^pdz>0$. So, choosing $\epsilon\in(0,\beta)$, we obtain
	$$\psi_+(t\hat{u}_1)\leq-\frac{t^p}{p}c_2+\frac{t^r}{r}c_1||\hat{u}_1||^r_r\ \mbox{for some}\ c_2>0.$$
	
	Since $r>p$, by choosing $t\in(0,1)$ even smaller if necessary, we obtain
	\begin{eqnarray*}
		&&\psi_+(t\hat{u}_1)<0,\\
		&\Rightarrow&\psi_+(\tilde{u})<0=\psi_+(0)\ \mbox{(see (\ref{eq38})), hence}\ \tilde{u}\neq 0.
	\end{eqnarray*}
	
	From (\ref{eq38}), we have
	\begin{eqnarray}\label{eq39}
		&&\psi_+'(\tilde{u})=0,\nonumber\\
		&\Rightarrow&A(\tilde{u})=N_{g_+}(\tilde{u}).
	\end{eqnarray}
	
	On (\ref{eq39}) first we act with $-\tilde{u}^-\in W^{1,p}_{0}(\Omega)$. Then
	\begin{eqnarray*}
		&&||D\tilde{u}^-||^p_p=0\ \mbox{(see (\ref{eq34}))},\\
		&\Rightarrow&\tilde{u}\geq 0,\ \tilde{u}\neq 0.
	\end{eqnarray*}
	
	Also, we act  on (\ref{eq39}) with $(\tilde{u}-\bar{u})^+\in W^{1,p}_{0}(\Omega)$. Then
	\begin{eqnarray*}
		&&\left\langle A(\tilde{u}),(\tilde{u}-\bar{u})^+\right\rangle=\int_{\Omega}\left[(\eta(z)-\epsilon)\bar{u}^{p-1}-c_1\bar{u}^{r-1}\right](\tilde{u}-\bar{u})^+dz\ (\mbox{see (\ref{eq34})})\\
		&&\hspace{3cm}\leq\int_{\Omega}f(z,\bar{u})(\tilde{u}-\bar{u})^+dz\ (\mbox{see (\ref{eq33})})\\
		&&\hspace{3cm}\leq\left\langle A(\bar{u}),(\tilde{u}-\bar{u})^+\right\rangle\ (\mbox{see (\ref{eq19})}),\\
		 &\Rightarrow&\int_{\{\tilde{u}>\bar{u}\}}(|D\tilde{u}|^{p-2}D\tilde{u}-|D\bar{u}|^{p-2}D\bar{u},D\tilde{u}-D\bar{u})_{\RR^N}dz\leq0,\\
		&\Rightarrow&|\{\tilde{u}>\bar{u}\}|_N=0,\\
		&\Rightarrow&\tilde{u}\leq\bar{u}.
	\end{eqnarray*}
	
	So, we have proved that
	\begin{equation}\label{eq40}
		\tilde{u}\in[0,\bar{u}],\ \tilde{u}\neq 0.
	\end{equation}
	
	From (\ref{eq34}) and (\ref{eq40}), equation (\ref{eq39}) becomes
	\begin{eqnarray*}
		&&A(\tilde{u})=(\eta(\cdot)-\epsilon)\tilde{u}^{p-1}-c_1\tilde{u}^{r-1},\\
		&\Rightarrow&\tilde{u}\ \mbox{is a positive solution of (\ref{eq36})}.
	\end{eqnarray*}
	
	The nonlinear regularity theory and the nonlinear maximum principle (see \cite[pp. 737-738]{10}) imply
	$$\tilde{u}\in {\rm int}\,C_+.$$
	
	Now we show that $\tilde{u}$ is the unique positive solution of (\ref{eq36}). To this end, let $\tilde{y}$ be another positive solution of (\ref{eq36}). As we did for $\tilde{u}$, we can show that $\tilde{y}\in[0,\bar{u}]\cap {\rm int}\,C_+$. Note that we can find $c_3>0$ such that for almost all $z\in\Omega$ the function $x\mapsto (\eta(z)+c_3-\epsilon)x^{p-1}-c_1x^{r-1}$ is nondecreasing on $[0,\rho]$ (recall $\rho=\max\{||\bar{u}||_{\infty},||\bar{v}||_{\infty}\}$). Let $t>0$ be the biggest positive real such that
	$$t\tilde{y}\leq\tilde{u}\ \mbox{(see Filippakis, Kristaly \& Papageorgiou \cite[Lemma 3.3]{8})}.$$
	
	Suppose $t\in(0,1)$. We have
	\begin{eqnarray*}
		&&-\Delta_p(t\tilde{y})+c_3(t\tilde{y})^{p-1}\\
		&=&t^{p-1}\left[-\Delta_p\tilde{y}+c_3\tilde{y}^{p-1}\right]\\
		&=&t^{p-1}\left[(\eta(z)+c_3-\epsilon)\tilde{y}^{p-1}-c_1\tilde{y}^{r-1}\right]\\
		&<&(\eta(z)-\epsilon)(t\tilde{y})^{p-1}-c_1(t\tilde{y})^{r-1}+c_3(t\tilde{y})^{p-1}\ (\mbox{since}\ r>p,\ t\in(0,1))\\
		&\leq&(\eta(z)-\epsilon)\tilde{u}^{p-1}-c_1\tilde{u}^{r-1}+c_3\tilde{u}^{p-1}\ (\mbox{since}\ t\tilde{y}\leq\tilde{u}\ \mbox{and the choice of $c_3$})\\
		&=&-\Delta_p\tilde{u}+c_3\tilde{u}^{p-1}\ (\mbox{since $\tilde{u}\in {\rm int}\,C_+$ is a solution of (\ref{eq36})}),\\
		\Rightarrow&&\tilde{u}-t\tilde{y}\in {\rm int}\,C_+\ (\mbox{see Arcoya \& Ruiz \cite[Proposition 2.6]{5}}).
	\end{eqnarray*}
	
	This contradicts the maximality of $t>0$. Therefore $t\geq 1$ and so
	$$\tilde{y}\leq\tilde{u}.$$
	
	If in the above argument we interchange the roles of $\tilde{y}$ and $\tilde{u}$, we also have
	\begin{eqnarray*}
		&&\tilde{u}\leq\tilde{y},\\
		&\Rightarrow&\tilde{u}=\tilde{y}.
	\end{eqnarray*}
	
	This proves the uniqueness of the solution $\tilde{u}\in {\rm int}\,C_+$ of problem (\ref{eq36}).
	
	Similarly, using the $C^1$-functional $\psi_-:W^{1,p}_{0}(\Omega)\rightarrow\RR$ defined by
	$$\psi_-(u)=\frac{1}{p}||Du||^p_p-\int_{\Omega}G_-(z,u(z))dz\ \mbox{for all}\ u\in W^{1,p}_{0}(\Omega),$$
	where $G_-(z,x)=\int^{x}_{0}g_-(z,s)ds$ and reasoning as above, we show that problem (\ref{eq37}) has a unique solution $\tilde{v}\in-{\rm int}\,C_+$. This proves Claim \ref{cl1}.
	\begin{claim}\label{cl2}
		$\tilde{u}\leq u$ for all $u\in S_+\cap[0,\bar{u}]$.
	\end{claim}
	
	Let $u\in S_+\cap[0,\bar{u}]\subseteq[0,\bar{u}]\cap {\rm int}\,C_+$ and consider the Carath\'eodory function
	\begin{eqnarray}\label{eq41}
		k_+(z,x)=\left\{\begin{array}{ll}
			0&\mbox{if}\ x<0\\
			(\eta(z)-\epsilon)x^{p-1}-c_1x^{r-1}&\mbox{if}\ 0\leq x\leq u(z)\\
			(\eta(z)-\epsilon)u(z)^{p-1}-c_1u(z)^{r-1}&\mbox{if}\ u(z)<x.
		\end{array}\right.
	\end{eqnarray}
	
	Let $K_+(z,x)=\int^{x}_{0}k_+(z,s)ds$ and consider the $C^1$-functional $\sigma_+:W^{1,p}_{0}(\Omega)\rightarrow\RR$ defined by
	$$\sigma_+(u)=\frac{1}{p}||Du||^p_p-\int_{\Omega}K_+(z,u(z))dz\ \mbox{for all}\ u\in W^{1,p}_{0}(\Omega).$$
	
	From (\ref{eq41}) we see that $\sigma_+$ is coercive. Also, it is sequentially weakly continuous. So, we can find $\tilde{u}_*\in W^{1,p}_{0}(\Omega)$ such that
	\begin{equation}\label{eq42}
		\sigma_+(\tilde{u}_*)=\inf\{\sigma_+(u):u\in W^{1,p}_{0}(\Omega)\}.
	\end{equation}
	
	As in the proof of Claim \ref{cl1}, we can show that for $t\in(0,1)$ small (at least such that $t\hat{u}_1\leq u\in {\rm int}\,C_+$), we have
	\begin{eqnarray*}
		&&\sigma_+(t\hat{u}_1)<0=\sigma_+(0),\\
		&\Rightarrow&\sigma_+(\tilde{u}_*)<0=\sigma_+(0)\ \mbox{(see (\ref{eq42})), hence}\ \tilde{u}_*\neq 0.
	\end{eqnarray*}
	
	As before, we can check that
	\begin{eqnarray*}
		&&K_{\sigma_+}\subseteq[0,u]\subseteq[0,\bar{u}],\\
		&\Rightarrow&\tilde{u}_*\in[0,u]\backslash\{0\}\ (\mbox{see (\ref{eq42})}),\\
		&\Rightarrow&\tilde{u}_*=\tilde{u}\in {\rm int}\,C_+\ (\mbox{see Claim \ref{cl1} and (\ref{eq41})}),\\
		&\Rightarrow&\tilde{u}\leq u\ \mbox{for all}\ u\in C_+\cap[0,\bar{u}].
	\end{eqnarray*}
	
	This proves Claim \ref{cl2}.
	
	Because of Claim \ref{cl2}, we have
	\begin{eqnarray*}
		&&\tilde{u}\leq u_n\ \mbox{for all}\ n\geq 1,\\
		&\Rightarrow&\tilde{u}\leq u_*\ (\mbox{see (\ref{eq32})})\\
		&\Rightarrow&u_*\neq 0.
	\end{eqnarray*}
	
	Hence we have
	$$u_*\in S_+\ \mbox{and}\ u_*=\inf S_+.$$
	
	Similarly, if $S_-$ is the set of negative solutions of (\ref{eq1}), we produce $v_*\in-{\rm int}\,C_+$ the biggest element of $S_-$. In this case, by Claim \ref{cl2} we have $v\leq\tilde{v}$ for all $v\in S_-\cap[\bar{v},0]$ with $S_-\subseteq-{\rm int}\,C_+$.
\end{proof}

As we have already mentioned, we will use these extremal solutions to produce a nodal solution. To do this, we need to strengthen the condition on $f(z,\cdot)$ near zero. Note that hypothesis $H_1(iii)$ permits that $f(z,\cdot)$ near zero is either $(p-1)$-linear or $(p-1)$-superlinear. We consider both cases and for both we produce nodal solutions.

First, we deal with the $(p-1)$-linear case. We impose the following conditions on the reaction $f(z,x)$.

\smallskip
$H_2:$ $f:\Omega\times\RR\rightarrow\RR$ is a measurable function such that for almost all $z\in \Omega$, $f(z,0)=0$, $f(z,\cdot)$ is locally $\alpha$-H\"{o}lder continuous with $\alpha\in\left(0,1\right]$ and local H\"{o}lder constant $k\in L^{\infty}(\Omega)_+$ and
\begin{itemize}
	\item[(i)] for every $\rho>0$, there exists $a_{\rho}\in L^{\infty}(\Omega)_+$ such that
	$$|f(z,x)|\leq a_{\rho}(z)\ \mbox{for almost all}\ z\in\Omega,\ \mbox{and all}\ |x|\leq\rho;$$
	\item[(ii)] $\limsup\limits_{x\rightarrow\pm\infty}\frac{f(z,x)}{|x|^{p-2}x}\leq\xi<\hat{\lambda}_1$ uniformly for almost all $z\in\Omega$;
	\item[(iii)] there exist $\xi_*\geq\xi_0>\hat{\lambda}_2$ such that
	$$\xi_0\leq\liminf\limits_{x\rightarrow 0}\frac{f(z,x)}{|x|^{p-2}x}\leq\limsup\limits_{x\rightarrow 0}\frac{f(z,x)}{|x|^{p-2}x}\leq\xi_*\ \mbox{uniformly for almost all}\ z\in\Omega;$$\\
 \item[(iv)] there exists $M_0>0$ such that for almost all $z\in\Omega$,
\begin{eqnarray*}
		&& x\mapsto\frac{f(z,x)}{x^{p-1}}\ \mbox{is nondecreasing on $[M_0,+\infty)$};\\
		&&x\mapsto\frac{f(z,x)}{|x|^{p-2}x}\ \mbox{is nonincreasing on $(-\infty, -M_0]$}.
	\end{eqnarray*}
\end{itemize}

\begin{prop}\label{prop8}
	If hypotheses $H_2$ hold, then problem (\ref{eq1}) admits a nodal solution $y_0\in[v_*,u_*]\cap C^1_0(\overline{\Omega})$ (here $[v_*,u_*]=\{u\in W^{1,p}_{0}(\Omega):v_*(z)\leq u(z)\leq u_*(z)\ \mbox{for almost all}\ z\in\Omega\}$ with $u_*\in {\rm int}\,C_+$ and $v_*\in-{\rm int}\,C_+$ being the extremal constant sign solutions produced in Proposition \ref{prop7}).
\end{prop}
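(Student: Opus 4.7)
The plan is to mount a mountain pass configuration between the extremal constant sign solutions $v_*$ and $u_*$ for a suitably truncated energy, and to use the extremality of $v_*,u_*$ together with a critical-groups computation at the origin (exploiting $\xi_0>\hat\lambda_2$) to force the resulting critical point to be nontrivial and nodal. Concretely, I would introduce the two-sided truncation
$$\hat f(z,x)=\left\{\begin{array}{ll}f(z,v_*(z))&\mbox{if}\ x<v_*(z),\\ f(z,x)&\mbox{if}\ v_*(z)\leq x\leq u_*(z),\\ f(z,u_*(z))&\mbox{if}\ u_*(z)<x,\end{array}\right.$$
together with its one-sided truncations $\hat f_\pm(z,x)=\hat f(z,\pm x^\pm)$, and form the coercive $C^1$-functionals $\hat\varphi,\hat\varphi_\pm:W^{1,p}_0(\Omega)\to\RR$. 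Using $(u-u_*)^+$ and $(v_*-u)^+$ as test functions (as in the proof of Proposition \ref{prop6}) gives $K_{\hat\varphi}\subseteq[v_*,u_*]$; Proposition \ref{prop7} then forces $K_{\hat\varphi_+}=\{0,u_*\}$ and $K_{\hat\varphi_-}=\{v_*,0\}$, so $u_*$ is the global minimizer of the coercive $\hat\varphi_+$ with $\hat\varphi_+(u_*)<0=\hat\varphi_+(0)$ (strict negativity obtained as in (\ref{eq23})--(\ref{eq24})). Since $u_*\in{\rm int}\,C_+$, $\hat\varphi$ and $\hat\varphi_+$ agree on a $C^1_0(\overline\Omega)$-neighborhood of $u_*$, so $u_*$ is a local $C^1_0(\overline\Omega)$-minimizer of $\hat\varphi$ and, by Proposition \ref{prop2}, a local $W^{1,p}_0(\Omega)$-minimizer; symmetrically for $v_*$.

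Assuming without loss of generality $\hat\varphi(v_*)\leq\hat\varphi(u_*)$ and that $u_*$ is isolated in $K_{\hat\varphi}$ (otherwise a cluster of distinct critical points in $[v_*,u_*]$ yields infinitely many nodal solutions by the extremality of $u_*,v_*$, and we are done), a standard argument provides $\rho\in(0,||v_*-u_*||)$ with $\max\{\hat\varphi(u_*),\hat\varphi(v_*)\}<m_\rho=\inf_{||u-u_*||=\rho}\hat\varphi$. Coercivity gives the $PS$-condition, so Theorem \ref{th1} yields a critical point $y_0\in K_{\hat\varphi}\subseteq[v_*,u_*]$ of mountain-pass type with $\hat\varphi(y_0)\geq m_\rho>\hat\varphi(u_*)$, so $y_0\notin\{u_*,v_*\}$; nonlinear regularity gives $y_0\in C^1_0(\overline\Omega)$.

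To exclude $y_0=0$ I would invoke the critical-groups machinery. As a mountain pass critical point $C_1(\hat\varphi,y_0)\neq 0$, so it suffices to show $C_1(\hat\varphi,0)=0$. Here $\xi_0>\hat\lambda_2$ in $H_2(iii)$ is the crucial input: pick $\hat\gamma\in\hat\Gamma$ with $\max_t||D\hat\gamma(t)||^p_p<\xi_0-\delta$ via Proposition \ref{prop3}, and scale by small $\epsilon>0$. The lower bound $F(z,x)\geq\frac{1}{p}(\xi_0-\delta)|x|^p$ for $|x|$ small (from $H_2(iii)$), combined with the fact that $\hat f$ is bounded (so $|\hat F|$ grows at most linearly in $|x|$), yields $\hat\varphi(\epsilon\hat\gamma(t))\leq\frac{\epsilon^p}{p}[||D\hat\gamma(t)||^p_p-(\xi_0-\delta)]+o(\epsilon^p)<0$ uniformly in $t$, producing a continuous curve from $-\epsilon\hat u_1$ to $\epsilon\hat u_1$ inside $\{\hat\varphi<0\}$ in any prescribed $W^{1,p}_0$-neighborhood of $0$, from which $C_1(\hat\varphi,0)=0$ follows by a standard deformation argument. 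Hence $y_0\neq 0$, and since $y_0\in[v_*,u_*]\setminus\{0\}$ solves \eqref{eq1}, extremality precludes constant sign: if $y_0\geq 0$ then $y_0\in{\rm int}\,C_+$ is a positive solution of \eqref{eq1} with $y_0\leq u_*$, forcing $y_0=u_*$, a contradiction; symmetrically $y_0\not\leq 0$. Hence $y_0$ is nodal.

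The main obstacle I anticipate is the computation $C_1(\hat\varphi,0)=0$: for $p\neq 2$ no Hessian/Morse-lemma is available at $0$, so one must produce an honest deformation along the scaled second-eigenvalue path, and control $\int_\Omega\hat F(z,\epsilon\hat\gamma(t))\,dz$ despite the lack of uniform $L^\infty$-smallness of $\hat\gamma(t)$; I would handle this by splitting $\Omega$ into $\{|\epsilon\hat\gamma(t)|\leq r\}$ and its complement and estimating the two contributions via the pointwise lower bound from $H_2(iii)$ and the linear upper bound on $\hat F$ (combined with Markov's inequality to show the bad set has $o(1)$ measure) respectively.
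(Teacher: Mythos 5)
Your setup — the two-sided truncation $h=\hat f$, the one-sided truncations, the inclusions $K_\beta\subseteq[v_*,u_*]$, $K_{\beta_\pm}$ determined by extremality, the local-minimizer step via Proposition~\ref{prop2}, and the mountain pass producing $y_0\in K_\beta\subseteq[v_*,u_*]$ with $y_0\notin\{u_*,v_*\}$ — coincides with the paper's Claims~\ref{cl3}--\ref{cl4} and the first half of the argument. The divergence, and the place where your proposal has a genuine gap, is precisely the step you flag as the ``main obstacle'': excluding $y_0=0$. You propose to show $C_1(\hat\varphi,0)=0$, and your evidence for this is a short path from $-\epsilon\hat u_1$ to $\epsilon\hat u_1$ lying in $\{\hat\varphi<0\}$ near the origin, asserting that $C_1=0$ ``follows by a standard deformation argument.'' This does not follow. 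The existence of such a path establishes at most that the two points $\pm\epsilon\hat u_1$ lie in the same path component of the negative sublevel set near $0$; it does not establish that that set is connected (other components could exist), and even connectedness of $\{\hat\varphi<0\}$ near $0$ does not yield $C_1(\hat\varphi,0)=H_1(\hat\varphi^0\cap U,\hat\varphi^0\cap U\setminus\{0\})=0$ without additional control on $H_1$ of both sets in the pair. For $p\neq 2$ the origin is a degenerate critical point, there is no Morse lemma or shifting theorem available, and $H_2(iii)$ permits resonance at $0$ with some $\hat\lambda_k$; computing any critical group at $0$ under $H_2$ is exactly the difficulty the paper is designed to avoid. (Contrast Theorem~\ref{th10}, where the concave hypothesis $H_3(iii)$ \emph{does} give $C_k(\beta,0)=0$ for all $k$ by a known result — that is why the paper uses critical groups there but not here.)

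The paper instead avoids critical groups at $0$ entirely: it uses the minimax characterization $\beta(y_0)=\inf_{\gamma\in\Gamma}\max_{t}\beta(\gamma(t))$ and constructs an explicit $\gamma_*\in\Gamma$ joining $v_*$ to $u_*$ with $\beta|_{\gamma_*}<0$, which at once gives $\beta(y_0)<0=\beta(0)$ and hence $y_0\neq 0$. Your scaled second-eigenvalue path is exactly the middle third $\hat\gamma_0$ of that construction (the paper also proves a density lemma, Claim~\ref{cl5}, so that the path can be taken in $C^1_0(\overline\Omega)$ and then scaled into $[v_*,u_*]$ pointwise). What you are missing are the two flanking pieces $\hat\gamma_\pm$: one applies the second deformation theorem to the coercive one-sided functional $\beta_+$ on the sublevel set $\beta_+^0\setminus\{0\}$, using $K_{\beta_+}=\{0,u_*\}$ and $\beta_+(u_*)<0$, to deform $\vartheta\hat u_1$ to $u_*$ while $\beta_+$ is nonincreasing (and then passes to positive parts so the path stays in the cone where $\beta=\beta_+$); similarly on the negative side. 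Concatenating $\hat\gamma_-,\hat\gamma_0,\hat\gamma_+$ produces $\gamma_*\in\Gamma$. Without this concatenation you have no admissible path in $\Gamma$ on which $\beta$ stays negative, so you cannot conclude from the mountain pass level, and your alternative route through $C_1(\hat\varphi,0)=0$ is not justified.
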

\begin{proof}
	We consider the following Carath\'eodory function
	\begin{eqnarray}\label{eq43}
		h(z,x)=\left\{\begin{array}{ll}
			f(z,v_*(z))&\mbox{if}\ x<v_*(z)\\
			f(z,x)&\mbox{if}\ v_*(z)\leq x\leq u_*(z)\\
			f(z,u_*(z))&\mbox{if}\ u_*(z)<x.
		\end{array}\right.
	\end{eqnarray}
	
	We set $H(z,x)=\int^{x}_{0}h(z,s)ds$ and consider the $C^1$-functional $\beta:W^{1,p}_{0}(\Omega)\rightarrow\RR$ defined by
	$$\beta(u)=\frac{1}{p}||Du||^p_p-\int_{\Omega}H(z,u(z))dz\ \mbox{for all}\ u\in W^{1,p}_{0}(\Omega).$$
	
	We also consider the positive and negative truncations of $h(z,\cdot)$, namely the Carath\'eodory functions
	$$h_{\pm}(z,x)=h(z,\pm x^{\pm}).$$
	
	We set $H_{\pm}(z,x)=\int^{x}_{0}h_{\pm}(z,s)ds$ and consider the $C^1$-functionals $\beta_{\pm}:W^{1,p}_{0}(\Omega)\rightarrow\RR$ defined by
	$$\beta_{\pm}(u)=\frac{1}{p}||Du||^p_p-\int_{\Omega}H_{\pm}(z,u(z))dz\ \mbox{for all}\ u\in W^{1,p}_{0}(\Omega).$$
	\begin{claim}\label{cl3}
		$K_{\beta}\subseteq[v_*,u_*],\ K_{\beta_+}=\{0,u_*\},\ K_{\beta_-}=\{0,v_*\}$.
	\end{claim}
	
	Let $u\in K_{\beta}$. Then
	\begin{equation}\label{eq44}
		A(u)=N_f(u)
	\end{equation}
	
	On (\ref{eq44}), first we act with $(u-u_*)^+\in W^{1,p}_{0}(\Omega)$. Then
	\begin{eqnarray*}
		&&\left\langle A(u),(u-u_*)^+\right\rangle=\int_{\Omega}f(z,u_*)(u-u_*)^+dz\ (\mbox{see (\ref{eq43})})\\
		&&\hspace{3cm}=\left\langle A(u_*),(u-u_*)^+\right\rangle\ (\mbox{since}\ u_*\in S_+),\\
		&\Rightarrow&\int_{\{u>u_*\}}(|Du|^{p-2}Du-|Du_*|^{p-2}Du_*,Du-Du_*)_{\RR^N}dz=0,\\
		&\Rightarrow&|\{u>u^*\}|_N=0,\ \mbox{hence}\ u\leq u^*.
	\end{eqnarray*}
	
	Similarly, acting on (\ref{eq44}) with $(v_*-u)^+\in W^{1,p}_{0}(\Omega)$, we obtain $v_*\leq u$. So, we have
	\begin{eqnarray*}
		&&u\in[v_*,u_*],\\
		&\Rightarrow&K_{\beta}\subseteq[v_*,u_*].
	\end{eqnarray*}
	
	In a similar fashion, we show that
	$$K_{\beta_+}\subseteq[0,u_*]\ \mbox{and}\ K_{\beta_-}\subseteq[v_*,0].$$
	
	The extremality of the solutions $u_*\in {\rm int}\,C_+$ and $v_*\in-{\rm int}\,C_+$ (see Proposition \ref{prop7}) implies that
	$$K_{\beta_+}=\{0,u_*\}\ \mbox{and}\ K_{\beta_-}=\{0,v_*\}.$$
	
	This proves Claim \ref{cl3}.
	\begin{claim}\label{cl4}
		$u_*\in {\rm int}\,C_+$ and $v_*\in-{\rm int}\,C_+$ are local minimizers of $\beta$.
	\end{claim}
	
	From (\ref{eq43}) it is clear that $\beta_+$ is coercive. Also, it is sequentially weakly lower semicontinuous. So, we can find $\hat{u}_*\in W^{1,p}_{0}(\Omega)$ such that
	\begin{equation}\label{eq45}
		\beta_+(\hat{u}_*)=\inf\{\beta_+(u):u\in W^{1,p}_{0}(\Omega)\}.
	\end{equation}
	
	As before, by virtue of hypothesis $H_2(iii)$, we have
	\begin{eqnarray}\label{eq46}
		&&\beta_+(\hat{u}_*)<0=\beta_+(0),\nonumber\\
		&\Rightarrow&\hat{u}_*\neq 0.
	\end{eqnarray}
	
	From (\ref{eq45}) and Claim \ref{cl1}, we have
	\begin{eqnarray}\label{eq47}
		&&\hat{u}_*\in K_{\beta_+}=\{0,u_*\},\nonumber\\
		&\Rightarrow&\hat{u}_*=u_*\in {\rm int}\,C_+\ (\mbox{see (\ref{eq46})}).
	\end{eqnarray}
	
	Note that $\beta|_{C_+}=\beta_+|_{C_+}$. Then from (\ref{eq47}) we see that
	\begin{eqnarray*}
		&&u_*\ \mbox{is a local}\ C^1_0(\overline{\Omega})-\mbox{minimizer of}\ \beta,\\
		&\Rightarrow&u_*\ \mbox{is a local}\ W^{1,p}_{0}(\Omega)-\mbox{minimizer of}\ \beta\ (\mbox{see Proposition \ref{prop2}}).
	\end{eqnarray*}
	
	Similarly for $v_*\in-{\rm int}\,C_+$ using this time the functional $\beta_-$.
	
	This proves Claim \ref{cl4}.
	
	Because of Claim \ref{cl1}, we may assume that $K_{\beta}$ is finite (otherwise we already have an infinity of nodal solutions, see (\ref{eq43}) and recall the extremality of $u_*\in {\rm int}\,C_+$ and of $v_*\in-{\rm int}\,C_+$). Also, without any loss of generality, we may assume that
	$$\beta(v_*)\leq\beta(u_*).$$
	
	The reasoning is similar if the opposite inequality holds. Because of Claim \ref{cl2}, we can find $\rho\in(0,1)$ small such that
	\begin{equation}\label{eq48}
		\beta(v_*)\leq\beta(u_*)<\inf\{\beta(u):||u-u_*||=\rho\}=m_{\rho},\ ||v_*-u_*||>\rho
	\end{equation}
	(see Aizicovici, Papageorgiou \& Staicu \cite{1}, proof of Proposition 29). Since $\beta(\cdot)$ is coercive (see (\ref{eq43})), it satisfies the $PS$-condition. This fact and (\ref{eq48}) permit the use of Theorem \ref{th1} (the mountain pass theorem). So, we can find $y_0\in W^{1,p}_{0}(\Omega)$ such that
	\begin{equation}\label{eq49}
		m_{\rho}\leq\beta(y_0)\ \mbox{and}\ y_0\in K_{\beta}\subseteq[v_*,u_*]\ (\mbox{see Claim \ref{cl1}}).
	\end{equation}
	
	From (\ref{eq48}) and (\ref{eq49}), it follows that
	$$y_0\notin\{u_*,v_*\}.$$
	
	So, if we can show that $y_0\neq 0$, then $y_0$ will be nodal (see (\ref{eq49})). By the mountain pass theorem (see Theorem \ref{th1}), we have
	\begin{equation}\label{eq50}
		\beta(y_0)=\inf\limits_{\gamma\in\Gamma}\max\limits_{0\leq t\leq 1}\beta(\gamma(t)),
	\end{equation}
	with $\Gamma=\{\gamma\in C([0,1],W^{1,p}_{0}(\Omega)):\gamma(0)=v_*,\gamma(1)=u_*\}$. According to (\ref{eq50}), in order to show the nontriviality of $y_0$, it suffices to construct a path $\gamma_*\in \Gamma$ such that $\beta|_{\gamma_*}<0=\beta(0)$.
	
	To this end note that hypothesis $H_2(iii)$ implies that we can find $\xi_1\in(\hat{\lambda}_2,\xi_0)$ and $\delta>0$ such that
	\begin{equation}\label{eq51}
		F(z,x)\geq\frac{1}{p}\xi_1|x|^p\ \mbox{for almost all}\ z\in\Omega,\ \mbox{and all}\ |x|\leq\delta.
	\end{equation}
	
	Let $$\partial B^{L^p}_{1}=\{u\in L^p(\Omega):||u||_p=1\},\quad \widehat{M}=W^{1,p}_{0}(\Omega)\cap\partial B^{L^p}_{1}\quad \mbox{and}\quad \widehat{M}_c=\widehat{M}\cap C^1_0(\overline{\Omega}).$$
We introduce the following sets of path
	\begin{eqnarray*}
		&&\hat{\Gamma}=\{\hat{\gamma}\in C([-1,1],\widehat{M}):\hat{\gamma}(-1)=-\hat{u}_1,\hat{\gamma}(1)=\hat{u}_1\},\\
		&&\hat{\Gamma}_c=\{\hat{\gamma}\in C([-1,1],\widehat{M}_c):\hat{\gamma}(-1)=-\hat{u}_1,\hat{\gamma}(1)=\hat{u}_1\}.
	\end{eqnarray*}
	\begin{claim}\label{cl5}
		$\hat{\Gamma}_c$ is dense in $\hat{\Gamma}$ for the $C([-1,1],W^{1,p}_{0}(\Omega))$-topology.
	\end{claim}
	
	Let $\hat{\gamma}\in\hat{\Gamma}$ and for every $n\geq 1$ we consider the multifunction $T_n:[-1,1]\rightarrow 2^{C^1_0(\overline{\Omega})}$ defined by
	$$T_n(t)=\left\{\begin{array}{ll}
		\{-\hat{u}_1\}&\mbox{if}\ t=-1\\
		\{u\in C^1_0(\overline{\Omega}):||u-\hat{\gamma}(t)||<\frac{1}{n}\}&\mbox{if}\ t\in(-1,1)\\
		\{\hat{u}_1\}&\mbox{if}\ t=1.
	\end{array}\right.$$
	
	Evidently, $T_n(\cdot)$ has nonempty convex values, which are open sets if $t\in(-1,1)$. Also, from Papageorgiou \& Kyritsi \cite[pp. 458-463]{17}, we have that $T_n(\cdot)$ is a lower semicontinuous multifunction. So, we can apply Theorem 3.1''' of Michael \cite{16} (see also Hu \& Papageorgiou \cite[p. 97]{12}) and find a continuous map $\tau_n:[-1,1]\rightarrow C^1_0(\overline{\Omega})$ such that $\tau_n(t)\in T_n(t)$ for all $t\in[-1,1]$, all $n\geq 1$. We have
	\begin{eqnarray}
		&&||\tau_n(t)-\hat{\gamma}(t)||<\frac{1}{n}\ \mbox{for all}\ t\in[-1,1],\ \mbox{all}\ n\geq 1,\label{eq52}\\
		&\Rightarrow&||\tau_n(t)||_p\rightarrow||\hat{\gamma}(t)||_p\ \mbox{uniformly in}\ t\in[-1,1]\ \mbox{as}\ n\rightarrow\infty .\label{eq53}
	\end{eqnarray}
	
	So, for $n\geq 1$ big enough, we can define
	$$\hat{\gamma}_n(t)=\frac{\tau_n(t)}{||\tau_n(t)||_p}\ \mbox{for all}\ t\in[-1,1]\ (\mbox{recall}\ \hat{\gamma}(t)\in\partial B^{L^p}_{1}\ \mbox{for all}\ t\in[-1,1])$$
	
	Then we have
	\begin{eqnarray}\label{eq54}
		 ||\hat{\gamma}_n(t)-\hat{\gamma}(t)||&\leq&||\hat{\gamma}_n(t)-\tau_n(t)||+||\tau_n(t)-\hat{\gamma}(t)||\nonumber\\
		&\leq&|1-||\tau_n(t)||_p|\frac{||\tau_n(t)||}{||\tau_n(t)||_p}+\frac{1}{n}\ \mbox{for all}\ t\in[-1,1],\  n\geq 1\ \mbox{(see (\ref{eq52}))}.
	\end{eqnarray}
	
	Also since $||\hat{\gamma}(t)||_p=1$ for all $t\in[-1,1]$, we can write
	\begin{eqnarray}\label{eq55}
		&&|1-||\tau_n(t)||_p|=|||\hat{\gamma}(t)||_p-||\tau_n(t)||p|\nonumber\\
		&&\hspace{2.3cm}\leq||\hat{\gamma}(t)-\tau_n(t)||_p\nonumber\\
		&&\hspace{2.3cm}\leq c_4||\hat{\gamma}(t)-\tau_n(t)||\ \mbox{for some}\ c_4>0,\ \mbox{and all}\ t\in[-1,1],\ n\geq 1,\nonumber\\
		&\Rightarrow&\max\limits_{-1\leq t\leq 1}|1-||\tau_n(t)||_p|\leq c_4\frac{1}{n}\ \mbox{for all}\ n\geq 1\ (\mbox{see (\ref{eq52})}).
	\end{eqnarray}
	
	Returning to (\ref{eq54}) and using (\ref{eq55}), we obtain
	$$\max\limits_{-1\leq t\leq 1}||\hat{\gamma}_n(t)-\hat{\gamma}(t)||\rightarrow 0\ \mbox{as}\ n\rightarrow\infty.$$
	
	Evidently, $\hat{\gamma}_n\in\hat{\Gamma}_c$ for all $n\geq 1$. So, we have proved Claim \ref{cl5}.
	
	Using Claim \ref{cl5} and Proposition \ref{prop3}, given $\eta\in(0,\xi_1-\hat{\lambda}_2)$, we can find $\bar{\gamma}_0\in\hat{\Gamma}_c$ such that
	\begin{equation}\label{eq56}
		||D\bar{\gamma}_0(t)||^p_p\leq\hat{\lambda}_2+\eta\ \mbox{for all}\ t\in[-1,1].
	\end{equation}
	
	The set $\bar{\gamma}_0([-1,1])$ is compact in $C^1_0(\overline{\Omega})$. Also, $u_*\in {\rm int}\,C_+$ and $v_*\in-{\rm int}\,C_+$ (see Proposition \ref{prop7}). So, using also Lemma 3.3 of Filippakis, Kristaly \& Papageorgiou \cite{8}, we can find $\vartheta\in(0,1)$ small such that
	\begin{eqnarray}\label{eq57}
		&&\vartheta\bar{\gamma}_0(t)\in[v_*,u_*]\ \mbox{for all}\ t\in[-1,1]\ \mbox{and}\ |\vartheta\bar{\gamma}_0(t)(z)|\leq\delta\ \mbox{for all}\ t\in[-1,1],\ z\in\overline{\Omega}\\
		&&\hspace{12cm}(\mbox{see (\ref{eq51})}).\nonumber
	\end{eqnarray}
	
	Let $\hat{\gamma}_0=\vartheta\bar{\gamma}_0$. Then $\hat{\gamma}_0$ is a path in $W^{1,p}_{0}(\Omega)$ connecting $-\vartheta\hat{u}_1$ and $\vartheta\hat{u}_1$ and also we have
	\begin{eqnarray}\label{eq58}
		 &&\beta(\hat{\gamma}_0(t))=\frac{1}{p}||D\hat{\gamma}_0(t)||^p_p-\int_{\Omega}F(z,\hat{\gamma}_0(t))dz\ (\mbox{see (\ref{eq43}) and (\ref{eq57})})\nonumber\\
		&&\hspace{1.6cm}\leq\frac{1}{p}[\hat{\lambda}_2+\eta-\xi_1]||\hat{\gamma}_0(t)||^p_p\ \mbox{for all}\ t\in[-1,1]\ (\mbox{see (\ref{eq51}), (\ref{eq56}), (\ref{eq57})})\nonumber\\
		&&\hspace{1.6cm}<0\ \mbox{for all}\ t\in[-1,1]\ (\mbox{recall that}\ 0<\eta<\xi_1-\hat{\lambda}_2),\nonumber\\
		&\Rightarrow&\beta|_{\hat{\gamma}_0}<0.
	\end{eqnarray}
	
	Next, we produce a path in $W^{1,p}_{0}(\Omega)$ connecting $\vartheta\hat{u}_1$ and $u_*$ and along which $\beta$ is negative.
	
	To this end, let $a=\beta_+(u_*)$. From the proof of Claim \ref{cl4}, we know that $a<0$ and because of Claim \ref{cl3}, we see that
	\begin{equation}\label{eq59}
		K^{a}_{\beta_+}=\{u_*\}.
	\end{equation}
	
	Applying the second deformation theorem (see, for example, Gasinski \& Papageorgiou \cite[p. 628]{10}), we can find a deformation $h:[0,1]\times(\beta^0_+\backslash\{0\})\rightarrow\beta^0_+$ such that
	\begin{eqnarray}
		&&h(0,u)=u\ \mbox{for all}\ u\in \beta^0_+\backslash\{0\},\label{eq60}\\
		&&h(1,u)=u_*\ \mbox{for all}\ u\in \beta^0_+\backslash\{0\}\ (\mbox{see (\ref{eq59})}),\label{eq61}\\
		&&\beta_+(h(t,u))\leq\beta_+(h,(s,u))\ \mbox{for all}\ t,s\in[0,1],s<t,\ \mbox{all}\ u\in\beta^0_+\backslash\{0\}.\label{eq62}
	\end{eqnarray}
	
	We define
	$$\hat{\gamma}_+(t)=h(t,\vartheta\hat{u}_1)^+\ \mbox{for all}\ t\in[0,1].$$
	
	Evidently, this is a path in $W^{1,p}_{0}(\Omega)$ and
	\begin{eqnarray*}
		&&\hat{\gamma}_+(0)=\vartheta\hat{u}_1\ (\mbox{see (\ref{eq60}) and recall}\ \vartheta\hat{u}_1\in {\rm int}\,C_+),\\
		&&\hat{\gamma}_+(1)=u_*\ (\mbox{see (\ref{eq61}) and recall}\ u_*\in {\rm int}\,C_+).
	\end{eqnarray*}
	
	Also, since $\hat{\gamma}_+(t)(z)\geq 0$ for all $z\in\overline{\Omega}$, all $t\in[0,1]$, we have
	\begin{eqnarray}\label{eq63}
		 &&\beta(\hat{\gamma}_+(t))=\beta_+(\hat{\gamma}(t))\leq\beta_+(\vartheta\hat{u}_1)=\beta(\vartheta\hat{u}_1)<0\ \mbox{for all}\ t\in[0,1]\nonumber\\
		&&\hspace{7cm}(\mbox{see (\ref{eq58}) and (\ref{eq62})}),\nonumber\\
		&\Rightarrow&\beta|_{\hat{\gamma}_+}<0.
	\end{eqnarray}
	
	In a similar way, we can produce a path $\hat{\gamma}_-$ in $W^{1,p}_{0}(\Omega)$ which connects $-\vartheta\hat{u}_1$ and $v_*$ and such that
	\begin{equation}\label{eq64}
		\beta|_{\hat{\gamma}_-}<0.
	\end{equation}
	
	We concatenate $\hat{\gamma}_-,\hat{\gamma}_0,\hat{\gamma}_+$ and generate a path $\gamma_*\in\Gamma$ such that
	\begin{eqnarray*}
		&&\beta|_{\gamma_*}<0\ (\mbox{see (\eqref{eq58}, \eqref{eq63}, \eqref{eq64})},\\
		&\Rightarrow&y_0\neq 0,\\
		&\Rightarrow&y_0\in C^1_0(\overline{\Omega})\ \mbox{(nonlinear regularity) is a nodal solution of (\ref{eq1})}.
	\end{eqnarray*}
\end{proof}

So, we can state our first multiplicity theorem.
\begin{theorem}\label{th9}
	If hypotheses $H_2$ hold, then problem (\ref{eq1}) admits at least three nontrivial solutions
	$$u_0\in {\rm int}\,C_+,\ v_0\in-{\rm int}\,C_+\ \mbox{and}\ y_0\in[v_0,u_0]\cap C^1_0(\overline{\Omega})\ \mbox{nodal}.$$
\end{theorem}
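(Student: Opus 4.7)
The plan is to observe that Theorem \ref{th9} is essentially an assembly of the three preceding propositions, so the task reduces to hypothesis matching and bookkeeping rather than new analysis. First I would verify that hypotheses $H_2$ imply hypotheses $H_1$. The items $(i)$, $(ii)$, $(iv)$ are literally identical, so only the behavior near zero needs checking: $H_2(iii)$ provides $\xi_0 > \hat{\lambda}_2 > \hat{\lambda}_1$ with $\liminf_{x\to 0} f(z,x)/|x|^{p-2}x \geq \xi_0$ uniformly in $z$, and this is precisely $H_1(iii)$ with the choice $\eta(z)\equiv\xi_0\in L^\infty(\Omega)$ (the strict-inequality-on-positive-measure requirement is vacuously verified since $\xi_0 > \hat{\lambda}_1$ everywhere).

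With $H_1$ in force, Proposition \ref{prop6} supplies two constant-sign solutions, and Proposition \ref{prop7} refines them to extremal constant-sign solutions $u_* \in \operatorname{int} C_+$ and $v_* \in -\operatorname{int} C_+$. Then, since $H_2$ holds in full, Proposition \ref{prop8} delivers a nodal solution $y_0 \in [v_*,u_*] \cap C^1_0(\overline{\Omega})$, where the stronger lower-asymptotic condition $\xi_0 > \hat{\lambda}_2$ at the origin is exactly what is used inside the proof of Proposition \ref{prop8} (through the estimate $F(z,x) \geq \tfrac{1}{p}\xi_1|x|^p$ with $\xi_1 > \hat{\lambda}_2$) to build a mountain-pass path along which the truncated functional $\beta$ stays negative.

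I would conclude by setting $u_0 := u_*$ and $v_0 := v_*$, so that $u_0 \in \operatorname{int} C_+$, $v_0 \in -\operatorname{int} C_+$, and $y_0 \in [v_0,u_0]\cap C^1_0(\overline{\Omega})$ is nodal, yielding three distinct nontrivial solutions of (\ref{eq1}). No step here is a real obstacle: the only mildly delicate point is confirming that $H_2(iii)$ legitimately implies $H_1(iii)$ (so that Propositions \ref{prop6} and \ref{prop7} apply), and that the nodal solution from Proposition \ref{prop8} genuinely differs from $u_0$ and $v_0$, which is automatic since $y_0$ changes sign while $u_0, v_0$ do not.
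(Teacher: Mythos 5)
Your proposal is correct and follows exactly the route the paper intends: Theorem \ref{th9} has no separate argument in the text, being simply the concatenation of Propositions \ref{prop6}, \ref{prop7}, and \ref{prop8}, with the hypothesis check $H_2\Rightarrow H_1$ (via $\eta\equiv\xi_0>\hat{\lambda}_2>\hat{\lambda}_1$) exactly as you describe. The only addendum worth recording is that the claimed enclosure $y_0\in[v_0,u_0]$ is consistent whether one takes $u_0,v_0$ to be the solutions from Proposition \ref{prop6} or relabels $u_0:=u_*,\ v_0:=v_*$, since $[v_*,u_*]\subseteq[v_0,u_0]$ in the former case.
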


Next, we change the geometry near the origin, by introducing a concave term. So, now the hypotheses on the reaction $f(z,x)$ are the following:

\smallskip
$H_3:$ $f:\Omega\times\RR\rightarrow\RR$ is a measurable function such that for almost all $z\in\Omega$, $f(z,0)=0,\ f(z,\cdot)$ is locally $\alpha$-H\"{o}lder continuous with $\alpha\in\left(0,1\right]$ and local H\"{o}lder constant $k\in L^{\infty}(\Omega)_+$ and
\begin{itemize}
	\item[(i)] for every $\rho>0$, there exists $a_{\rho}\in L^{\infty}(\Omega)_+$ such that
	$$|f(z,x)|\leq a_{\rho}(z)\ \mbox{for almost all}\ z\in\Omega,\ \mbox{and all}\ |x|\leq\rho;$$
	\item[(ii)] $\limsup\limits_{x\rightarrow\pm\infty}\frac{f(z,x)}{|x|^{p-2}x}\leq\xi<\hat{\lambda}_1$ uniformly for almost all $z\in\Omega$;
	\item[(iii)] there exist $q\in(1,p)$ and $\delta>0$ such that
	\begin{eqnarray*}
		&&0<f(z,x)x\leq qF(z,x)\ \mbox{for almost all}\ z\in\Omega,\ \mbox{and all}\ 0<|x|\leq\delta,\\
		&&0<\mbox{ess}\,\inf\limits_{\Omega}F(\cdot,\pm\delta),
	\end{eqnarray*}
	where $F(z,x)=\int^{x}_{0}f(z,s)ds$;\\
\item[(iv)] there exists $M_0>0$ such that for almost all $z\in\Omega$,
\begin{eqnarray*}
		&& x\mapsto\frac{f(z,x)}{x^{p-1}}\ \mbox{is nondecreasing on $[M_0,+\infty)$};\\
		&&x\mapsto\frac{f(z,x)}{|x|^{p-2}x}\ \mbox{is nonincreasing on $(-\infty, -M_0]$}.
	\end{eqnarray*}
\end{itemize}
\begin{remark}
	For example, we can think of a reaction of the form
	$$f(z,x)=k_0(z)|x|^{q-2}x+f_0(z,x),$$
	with $1<q<2,\ k_0\in L^{\infty}(\Omega)$ and $f_0(z,x)$ is a measurable function such that for almost all $z\in\Omega,\ f_0(z,\cdot)$ is locally $\alpha$-H\"{o}lder continuous with $\alpha\in(0,1)$ and local H\"{o}lder constant $k\in L^{\infty}(\Omega)_+$ and
	 $$\limsup\limits_{x\rightarrow\pm\infty}\frac{f_0(z,x)}{|x|^{p-2}x}\leq\xi_1<\hat{\lambda}_1\ \mbox{and}\ \lim\limits_{x\rightarrow 0}\frac{f_0(z,x)}{|x|^{p-2}x}=0\ \mbox{uniformly for almost all}\ z\in\Omega.$$
\end{remark}

We are ready to state and prove our second multiplicity theorem.
\begin{theorem}\label{th10}
	If hypotheses $H_3$ hold, then problem (\ref{eq1}) admits at least three nontrivial solutions
	$$u_0\in {\rm int}\,C_+,\ v_0\in-{\rm int}\,C_+\ \mbox{and}\ y_0\in[v_0,u_0]\cap C^1_0(\overline{\Omega})\ \mbox{nodal}.$$
\end{theorem}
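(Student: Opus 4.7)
My plan is to follow the three-step scheme established for Theorem~\ref{th9}, with adjustments where $H_3$ replaces $H_2$: (a) produce two constant sign solutions $u_0\in {\rm int}\,C_+$ and $v_0\in -{\rm int}\,C_+$ (analogue of Proposition~\ref{prop6}); (b) refine them to extremal constant sign solutions $u_*\in {\rm int}\,C_+$ and $v_*\in -{\rm int}\,C_+$ (analogue of Proposition~\ref{prop7}); and (c) apply the mountain pass theorem to the doubly truncated functional $\beta$ whose peaks are $u_*$ and $v_*$, yielding a nodal solution $y_0\in[v_*,u_*]\cap C^1_0(\overline{\Omega})$ (analogue of Proposition~\ref{prop8}). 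The only difference between $H_2$ and $H_3$ is the behaviour near the origin: the $(p-1)$-linear condition is replaced by the concave (Ambrosetti--Brezis--Cerami-type) condition $H_3(iii)$.

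For Steps (a) and (b), the constructions of the strict upper/lower solutions $\bar u,\bar v$ from Proposition~\ref{prop6} depend only on $H_3(i),(ii),(iv)$ and carry over verbatim. The concave condition is only needed to verify that $\hat\varphi_+(t\hat u_1)<0$ for small $t>0$: since $f(z,x)x\leq q F(z,x)$ implies $x\mapsto F(z,x)/|x|^q$ is non-increasing on $(0,\delta]$, I would combine this with $\mathrm{ess\,inf}_\Omega F(\cdot,\pm\delta)>0$ to obtain $F(z,x)\geq c_0|x|^q$ on $\{|x|\leq\delta\}$; then $\hat\varphi_+(t\hat u_1)\leq \tfrac{t^p}{p}\hat\lambda_1-c_0t^q\|\hat u_1\|_q^q<0$ for small $t>0$, since $q<p$. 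For Step (b), I would replace the $(p-1)$-linear auxiliary map of Proposition~\ref{prop7} by the concave-convex map $g_+(z,x)=c_0 q x^{q-1}-c_1 x^{r-1}$ (for $0\leq x\leq\bar u(z)$, suitably extended) with $r>p$; the associated auxiliary problem has a unique positive solution $\tilde u\in {\rm int}\,C_+$ (because the map $x\mapsto g_+(z,x)/x^{p-1}$ is strictly decreasing on $(0,\infty)$, giving Brezis--Oswald/Arcoya--Ruiz uniqueness), and the proof of Claim~\ref{cl2} of Proposition~\ref{prop7} adapts to give $\tilde u\leq u$ for every $u\in S_+\cap[0,\bar u]$, so that $u_*=\inf S_+\in {\rm int}\,C_+$; analogously for $v_*$.

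The hard part is Step (c), namely showing that the mountain pass critical point $y_0\in K_\beta\subseteq[v_*,u_*]$ is non-zero (extremality of $u_*,v_*$ then forces $y_0$ to be sign-changing). As in Proposition~\ref{prop8}, this reduces to producing a path $\gamma_*\in\Gamma=\{\gamma\in C([0,1],W^{1,p}_0(\Omega)):\gamma(0)=v_*,\,\gamma(1)=u_*\}$ along which $\beta<0$, so that the mountain pass value $c\leq\max\beta|_{\gamma_*}<0=\beta(0)$. The obstacle is that any such path must cross from the negative cone to the positive cone yet avoid the origin; the Cuesta--de Figueiredo--Gossez path of Proposition~\ref{prop8} (constructed on the $L^p$-sphere using $\hat\lambda_2$) is no longer available here. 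My substitute exploits a two-dimensional detour: pick $e\in {\rm int}\,C_+\cap C^1_0(\overline{\Omega})$ linearly independent from $\hat u_1$, and set
\[
\gamma_0(s) \,=\, \vartheta\bigl[\sin(\tfrac{\pi}{2}s)\hat u_1+\cos(\tfrac{\pi}{2}s)e\bigr],\qquad s\in[-1,1],
\]
which joins $-\vartheta\hat u_1$ and $\vartheta\hat u_1$ and never vanishes. For $\vartheta>0$ small, Lemma~3.3 of \cite{8} guarantees $\gamma_0(s)\in[v_*,u_*]$ with $\|\gamma_0(s)\|_\infty\leq\delta$ uniformly in $s$, so $F(z,\gamma_0(s))\geq c_0|\gamma_0(s)|^q$ and
\[
\beta(\gamma_0(s))\,\leq\,\tfrac{\vartheta^p}{p}\sup_{s\in[-1,1]}\|D\gamma_0(s)\|_p^p-c_0\vartheta^q\inf_{s\in[-1,1]}\|\sin(\tfrac{\pi}{2}s)\hat u_1+\cos(\tfrac{\pi}{2}s)e\|_q^q\,<\,0
\]
for $\vartheta$ small (the infimum is strictly positive by linear independence, and $q<p$). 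Concatenating $\gamma_0$ with the two side paths $\gamma_\pm$ (from $\pm\vartheta\hat u_1$ to $u_*$ and $v_*$ respectively, built via the second deformation theorem applied to $\beta_\pm$ exactly as in Proposition~\ref{prop8}) then yields $\gamma_*\in\Gamma$ with $\beta|_{\gamma_*}<0$, which forces $y_0\neq 0$ and completes the proof.
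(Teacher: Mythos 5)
Your overall scheme (constant-sign solutions, extremality, mountain pass in $[v_*,u_*]$) is the paper's scheme, but for the crucial step of ruling out $y_0=0$ you take a genuinely different route. The paper argues via critical groups: it cites the well-known fact that the concave condition $H_3(iii)$ forces $C_k(\beta,0)=0$ for all $k\geq 0$ (equation (\ref{eq66})), and combines this with $C_1(\beta,y_0)\neq 0$ from the mountain-pass characterization. You instead construct an explicit path $\gamma_*\in\Gamma$ with $\beta|_{\gamma_*}<0$, using the two-dimensional arc $\gamma_0(s)=\vartheta[\sin(\tfrac{\pi}{2}s)\hat u_1+\cos(\tfrac{\pi}{2}s)e]$ with $e\in{\rm int}\,C_+$ to detour around the origin, then concatenating with the two deformation paths as in Proposition~\ref{prop8}. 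This is correct and is a nice, more self-contained alternative: the bound $F(z,x)\geq c_0|x|^q$ with $q<p$ drives the energy negative on the scaled arc, the arc is strictly bounded away from zero by compactness and linear independence, and all three pieces (including $\beta_+(\hat\gamma_+(t))\leq\beta_+(\vartheta\hat u_1)<0$) go through exactly as in Proposition~\ref{prop8}. What the paper's approach buys is brevity (outsourcing the nontrivial Morse-theoretic computation $C_k(\beta,0)=0$ to the literature); what yours buys is that the argument is elementary and does not invoke critical groups at all.

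There is, however, a genuine gap in your Step~(b). To make Claim~\ref{cl2} of Proposition~\ref{prop7} go through with the auxiliary map $g_+(z,x)=c_0 q x^{q-1}-c_1 x^{r-1}$, you need the pointwise comparison $g_+(z,x)\leq f(z,x)$ for $0<x\leq\delta$ (this is what allows you to act with $(\tilde u_*-u)^+$ and conclude $\tilde u_*\leq u$). But hypothesis $H_3(iii)$ gives a lower bound on $F$, not on $f$: the condition $0<f(z,x)x\leq qF(z,x)$ only says $\frac{F(z,\cdot)}{|\cdot|^q}$ is nonincreasing and $f>0$, which forces $F(z,x)\geq c_0|x|^q$ but does \emph{not} prevent $f(z,x)/x^{q-1}$ (hence $f(z,x)/x^{p-1}$) from having $\liminf$ equal to zero along a sequence $x_n\downarrow 0$. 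One can construct $f$ satisfying $H_3(iii)$ for which $\inf_{x\in(0,\delta]} f(z,x)/(c_0 q x^{q-1})=0$, so no choice of $c_0,c_1>0$ yields $g_+\leq f$ near the origin and your comparison argument fails. To be fair, the paper is silently exposed to the same issue: it simply cites Proposition~\ref{prop7}, whose proof uses the estimate (\ref{eq33}) coming from $H_1(iii)$, and $H_3(iii)$ does not imply $H_1(iii)$. So this is not a flaw you introduced, but your attempt to repair it by swapping to a concave-convex comparison function does not close it; one would need either an extra pointwise lower bound on $f$ near $0$ (as in the paper's model example $f(z,x)=k_0(z)|x|^{q-2}x+f_0(z,x)$) or a different mechanism for establishing $u_*=\inf S_+>0$.

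Your Step~(a), by contrast, is fine: the inequality $F(z,x)\geq c_0|x|^q$ is exactly what is needed to show $\hat\varphi_+(t\hat u_1)<0$ for small $t$, and this step only ever uses $F$, not $f$, near the origin.
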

\begin{proof}
	The two constant sign solutions come from Proposition \ref{prop6}.
	
	Let $u_*\in {\rm int}\,C_+$ and $v_*\in-{\rm int}\,C_+$ be the two extremal constant sign solutions produced in Proposition \ref{prop7}. Using them and reasoning as in the first part of the proof of Proposition \ref{prop8}, via the functional $\beta$ and the mountain pass theorem (see Theorem \ref{th1}), we obtain a third solution
	$$y_0\in[v_*,u_*]\cap C^1_0(\overline{\Omega}).$$
	
	Since $y_0$ is a critical point of mountain pass type for the functional $\beta$, we have
	\begin{equation}\label{eq65}
		C_1(\beta,y_0)\neq 0.
	\end{equation}
	
	On the other hand it is well-known that hypothesis $H_3(iii)$ implies that
	\begin{equation}\label{eq66}
		C_k(\beta,0)=0\ \mbox{for all}\ k\geq 0.
	\end{equation}
	
	Comparing (\ref{eq65}) and (\ref{eq66}) we infer that $y_0\not=0$. This means that $y_0\in[v_*,u_*]\cap C^1_0(\overline{\Omega})$ is a nodal solution of problem (\ref{eq1}).
\end{proof}

\section{The semilinear equation $(p=2)$}

In this section, we focus on the semilinear equation (that is, $p=2$). So, the problem under consideration is the following:
\begin{equation}\label{eq67}
	-\Delta u(z)=f(z,u(z))\ \mbox{in}\ \Omega,\ u|_{\partial\Omega}=0.
\end{equation}

By improving the regularity on the reaction $f(z,\cdot)$, we can produce a second nodal solution for a total of four nontrivial solutions for problem (\ref{eq67}).

The hypotheses on the reaction $f(z,x)$ are the following:

\smallskip
$H_4:$ $f:\Omega\times\RR\rightarrow\RR$ is a measurable function such that for almost all $z\in\Omega\ f(z,0)=0$, $f(z,\cdot)\in C^1(\RR)$ and
\begin{itemize}
	\item[(i)] for every $\rho>0$, there exists $a_{\rho}\in L^{\infty}(\Omega)$ such that
	$$|f'_x(z,x)|\leq a_{\rho}(z)\ \mbox{for almost all}\ z \in\Omega,\ \mbox{and all}\ |x|\leq\rho;$$
	\item[(ii)] $\limsup\limits_{x\rightarrow\pm\infty}\frac{f(z,x)}{x}\leq\xi<\hat{\lambda}_1$ uniformly for almost all $z\in\Omega$;
	\item[(iii)] $f'_x(z,0)=\lim\limits_{x\rightarrow 0}\frac{f(z,x)}{x}$ uniformly for almost all $z\in\Omega$ and there exists integer $m\geq 2$ such that
	$$\hat{\lambda}_m\leq f'_x(z,0)\leq\hat{\lambda}_{m+1}\ \mbox{for almost all}\ z\in\Omega$$
	with the first inequality being strict on a set of positive measure and for $F(z,x)=\int^{x}_{0}f(z,s)ds$ we have
	$$F(z,x)\leq\frac{\hat{\lambda}_{m+1}}{2}x^2\ \mbox{for almost all}\ z\in\Omega,\ \mbox{and all}\ x\in\RR;$$
\item[(iv)] there exists $M_0>0$ such that for almost all $z\in\Omega$,
\begin{eqnarray*}
		&& x\mapsto\frac{f(z,x)}{x^{p-1}}\ \mbox{is nondecreasing on $[M_0,+\infty)$};\\
		&&x\mapsto\frac{f(z,x)}{|x|^{p-2}x}\ \mbox{is nonincreasing on $(-\infty, -M_0]$}.
	\end{eqnarray*}
\end{itemize}
\begin{remark}
	The differentiability of $f(z,\cdot)$ and hypothesis $H_4(i)$ imply that $f(z,\cdot)$ is locally Lipschitz with locally Lipschitz constant in $L^{\infty}(\Omega)_+$.
\end{remark}

From Proposition \ref{prop7}, we know that we have extremal constant sign solutions
$$u_*\in {\rm int}\,C_+\ \mbox{and}\ v_*\in-{\rm int}\,C_+.$$

Using these extremal constant sign solutions, we consider the functional $\beta:H^1_0(\Omega)\rightarrow\RR$ introduced in the proof of Proposition \ref{prop8} (now $p=2$). We have $\beta\in C^{2-0}(H^1_0(\Omega))$ (that is $\beta$ is in $C^1(H^1_0(\Omega))$ with locally Lipschitz derivative).
\begin{prop}\label{prop11}
	If hypotheses $H_4$ hold, then $C_k(\beta,0)=\delta_{k,d_m}\ZZ$ for all $k\geq 0$ with $d_m=dim\overset{m}{\underset{\mathrm{k=1}}\oplus}E(\hat{\lambda}_k)\geq 2$.
\end{prop}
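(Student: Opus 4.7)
The plan is to establish a local linking structure for $\beta$ at the origin relative to the spectral decomposition $H^1_0(\Omega) = \bar{H}_m \oplus \widehat{H}_{m+1}$, where $\bar{H}_m = \bigoplus_{k=1}^{m} E(\hat{\lambda}_k)$ has dimension $d_m \geq 2$, and then read off the critical groups via a Morse-theoretic splitting. The first observation is a reduction: since $u_* \in \mbox{int}\,C_+$ and $v_* \in -\mbox{int}\,C_+$, for every $u$ in a small $C^1_0(\overline{\Omega})$-ball about $0$ one has $v_*(z) \leq u(z) \leq u_*(z)$ pointwise on $\overline{\Omega}$, so the truncation in the definition of $\beta$ is inactive and $\beta(u) = \varphi(u)$ with $\varphi(u) = \frac{1}{2}\|Du\|_2^2 - \int_\Omega F(z,u)\,dz$. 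Under hypothesis $H_4(i)$ and the $C^1$-smoothness of $f(z,\cdot)$, $\beta$ thus coincides near $0$ with the $C^2$ functional $\varphi$, whose Hessian at $0$ is $\beta''(0)[v,w] = \int_\Omega (Dv \cdot Dw - f'_x(z,0)\,vw)\,dz$.

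Next, I verify the two linking inequalities. On $\bar{H}_m$, I combine the uniform Taylor expansion $F(z,x) = \frac{1}{2} f'_x(z,0)\,x^2 + o(x^2)$ (coming from $H_4(iii)$'s uniform limit of $f(z,x)/x$ as $x \to 0$) with Proposition \ref{prop4}(a), applied to $\xi(z) = f'_x(z,0) \geq \hat\lambda_m$ (strict on a set of positive measure), and use equivalence of all norms on the finite-dimensional subspace $\bar{H}_m$ to absorb the $o(\|u\|^2)$ remainder; this yields constants $c, \rho_1 > 0$ with $\beta(u) \leq -c \|u\|^2$ for every $u \in \bar{H}_m$ with $\|u\| \leq \rho_1$. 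On $\widehat{H}_{m+1}$, the global pointwise bound $F(z,x) \leq \frac{\hat\lambda_{m+1}}{2}\,x^2$ in $H_4(iii)$, combined with the variational characterization (\ref{eq5}) of $\hat\lambda_{m+1}$, gives $\varphi(u) \geq \frac{1}{2}(\|Du\|_2^2 - \hat\lambda_{m+1}\|u\|_2^2) \geq 0$ for every $u \in \widehat{H}_{m+1}$, and hence $\beta(u) \geq 0$ on a $C^1_0$-neighborhood of $0$ within $\widehat{H}_{m+1}$.

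With the local linking established, and since $0$ is an isolated critical point of $\beta$ (we may assume $K_\beta$ is finite) while $\beta$ is coercive and therefore satisfies the $PS$-condition, the critical groups follow from the Gromoll--Meyer splitting theorem. The Hessian $\beta''(0)$ has Morse index exactly $d_m$ (contributed by $\bar{H}_m$); its nullspace $N$ is either trivial (non-resonant subcase, in which the classical Morse lemma gives the conclusion at once) or a subspace of the finite-dimensional $E(\hat\lambda_{m+1})$ (resonant subcase). In the resonant subcase, the restriction $\tilde{\beta}$ of $\beta$ to $N$ satisfies $\tilde{\beta} \geq 0$ near $0$ by the global quadratic bound, so $0$ is a local minimum of $\tilde{\beta}$ and $C_j(\tilde{\beta},0) = \delta_{j,0}\,\mathbb{Z}$; the splitting theorem then yields $C_k(\beta,0) = C_{k-d_m}(\tilde{\beta},0) = \delta_{k,d_m}\,\mathbb{Z}$. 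The main delicate point throughout is precisely this possible resonance at $\hat{\lambda}_{m+1}$, and the global quadratic upper bound in $H_4(iii)$ is the exact ingredient that overcomes it, by forcing the behavior on the resonant nullspace to be that of a local minimum so that the potential degeneracy contributes no extra cohomology.
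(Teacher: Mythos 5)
Your overall strategy — reduce to a local linking around the spectral splitting $\bar{H}_m\oplus\widehat{H}_{m+1}$, obtain strict negativity on the finite\mbox{-}dimensional piece from $\hat\lambda_m\leq f'_x(z,0)$ (strict on a positive-measure set) via Proposition \ref{prop4}(a), obtain nonnegativity on $\widehat{H}_{m+1}$ from the global bound $F(z,x)\leq\tfrac{\hat\lambda_{m+1}}{2}x^2$ and \eqref{eq5}, and then apply a splitting/shifting theorem — is essentially the same as the paper's, and the two linking inequalities are derived the same way. However, there are two genuine gaps in the final step.

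First, you claim that near $0$ the functional $\beta$ coincides with a $C^2$ functional and then invoke the classical Gromoll--Meyer splitting theorem. This is not quite right: the identity $\beta=\varphi$ holds only on a $C^1_0(\overline{\Omega})$-neighborhood of $0$, not on an $H^1_0(\Omega)$-neighborhood (smallness in $H^1_0$ does not control $\|u\|_\infty$), and the critical groups are by definition computed in the $H^1_0(\Omega)$ topology. In $H^1_0(\Omega)$ the truncated functional $\beta$ is only $C^{2-0}$ (the truncation \eqref{eq43} creates corners at $u_*,v_*$), so the $C^2$ Gromoll--Meyer theorem does not apply. The correct tool, which the paper uses, is the shifting theorem for $C^{2-0}$ functionals due to Li, Li \& Liu \cite{13}.

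Second, in the resonant subcase you assert that the reduced functional $\tilde\beta$ on the nullspace $N\subseteq E(\hat\lambda_{m+1})$ satisfies $\tilde\beta\geq 0$ near $0$ ``by the global quadratic bound.'' That does not follow directly: in the Gromoll--Meyer reduction one has $\tilde\beta(v)=\beta(v+\psi(v))$ with $\psi(v)$ lying in the orthogonal complement of $N$, so $v+\psi(v)\notin\widehat{H}_{m+1}$ in general, and the inequality $\beta\geq 0$ on $\widehat{H}_{m+1}$ says nothing about $\tilde\beta$. The way around this, which is the paper's route, is indirect: the local linking you establish gives $C_{d_m}(\beta,0)\neq 0$, and then the shifting theorem $C_k(\beta,0)=C_{k-d_m}(\tilde\beta,0)$ forces $C_0(\tilde\beta,0)\neq 0$, so $0$ is a local minimizer of the finite-dimensional $\tilde\beta$ and $C_j(\tilde\beta,0)=\delta_{j,0}\ZZ$; plugging back gives $C_k(\beta,0)=\delta_{k,d_m}\ZZ$. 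In short, you should not try to read off $\tilde\beta\geq 0$ directly; you should use local linking plus the $C^{2-0}$ shifting theorem.
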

\begin{proof}
	If in hypothesis $H_4(iii)$ the inequality $f'_x(z,0)\leq\hat{\lambda}_{m+1}$ is also strict on a set (not necessarily the same) of positive measure, then $u=0$ is a nondegenerate critical point of $\beta$ and so from Li, Li \& Liu \cite{13} we have
	\begin{equation}\label{eq68}
		C_k(\beta,0)=\delta_{k,d_m}\ZZ\ \mbox{for all}\ k\geq 0.
	\end{equation}
	
	So, suppose that $f'_x(z,0)=\hat{\lambda}_{m+1}$ for almost all $z\in\Omega$. Using hypothesis $H_4(iii)$ and (\ref{eq5}), we have
	\begin{equation}\label{eq69}
		\beta(u)\geq\frac{1}{2}||Du||^2_2-\frac{\hat{\lambda}_{m+1}}{2}||u||^2\geq 0\ \mbox{for all}\ u\in\hat{H}_{m+1}=\overline{{\underset{{k\geq m+1}}\oplus}E(\hat{\lambda}_k)}.
	\end{equation}
	
	On the other hand, given $\epsilon>0$, we can find $\delta=\delta(\epsilon)>0$ such that
	\begin{equation}\label{eq70}
		F(z,x)\geq\frac{1}{2}(f'_x(z,0)-\epsilon)x^2\ \mbox{for almost all}\ z\in\Omega,\ \mbox{and all}\ x\in[-\delta,\delta].
	\end{equation}
	
	Since $\bar{H}_m=\overset{m}{\underset{\mathrm{k=1}}\oplus}E(\hat{\lambda}_k)$ is finite-dimensional, all norms are equivalent and so we can find small enough $\rho>0$ such that if $\bar{B}_{\rho}=\{u\in H^1_0(\Omega):||u||\leq\rho\}$, then
	\begin{equation}\label{eq71}
		u\in\bar{H}_m\cap B_{\rho}\Rightarrow|u(z)|\leq\delta\ \mbox{for all}\ z\in\overline{\Omega}\ \mbox{and}\ u\in[v_*,u_*].
	\end{equation}
	
	Let $u\in\bar{H}_m\cap B_{\rho}$. Then we have
	\begin{eqnarray*}
		 \beta(u)&\leq&\frac{1}{2}||Du||^2_2-\frac{1}{2}\int_{\Omega}f'_x(z,0)u^2dz+\frac{\epsilon}{2}||u||^2_2\ (\mbox{see (\ref{eq43}), (\ref{eq70}), (\ref{eq71})})\\
		&\leq&-\frac{c_5-\epsilon}{2}||u||^2\ \mbox{for some}\ c_5>0\ (\mbox{see Proposition \ref{prop4}}).
	\end{eqnarray*}
	
	Choosing $\epsilon\in(0,c_5)$, we infer that
	\begin{equation}\label{eq72}
		\beta(u)\leq 0\ \mbox{for all}\ u\in\bar{H}_m\cap\bar{B}_{\rho}.
	\end{equation}
	
	From (\ref{eq69}) and (\ref{eq72}) we see that $\beta$ has local linking at the origin and of course it is locally Lipschitz there. Therefore
	$$C_{d_m}(\beta,0)\neq 0.$$
	
	Invoking the shifting theorem for $C^{2-0}$ functionals due to Li, Li \& Liu \cite{13}, we conclude that
	$$C_k(\beta,0)=\delta_{k,d_m}\ZZ\ \mbox{for all}\ k\geq 0.$$
The proof is now complete.
\end{proof}

Now we are ready for our third multiplicity theorem concerning problem (\ref{eq67}).
\begin{theorem}\label{th12}
	If hypotheses $H_4$ hold, then problem (\ref{eq67}) admits at least four nontrivial solutions
	$$u_0\in {\rm int}\,C_+,\ v_0\in-{\rm int}\,C_+\ \mbox{and}\ y_0,\hat{y}\in int_{C^1_0(\overline{\Omega})}[v_0,u_0]\ \mbox{nodal.}$$
\end{theorem}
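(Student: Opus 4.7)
The plan is to retain the three solutions produced by Theorem \ref{th10} (now in the semilinear case $p=2$) and then use Morse theory to force the existence of a fourth nontrivial solution, which will necessarily be a second nodal one. I would first invoke Proposition \ref{prop6} to obtain constant sign solutions $u_0=u_*\in{\rm int}\,C_+$ and $v_0=v_*\in-{\rm int}\,C_+$ (taking the extremal ones from Proposition \ref{prop7}), and then rerun the mountain pass portion of the proof of Proposition \ref{prop8} applied to the truncated $C^{2-0}$ functional $\beta:H^1_0(\Omega)\to\RR$ to produce a nodal solution $y_0\in[v_*,u_*]\cap C^1_0(\overline{\Omega})$. Since $\beta$ is coercive (note that $H_4(ii)$ implies the relevant asymptotic control), $\beta$ satisfies the $PS$-condition and is bounded below.

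Next, I would compute the critical groups at each element of $K_{\beta}$, assuming for contradiction that $K_{\beta}=\{0,u_*,v_*,y_0\}$. By Proposition \ref{prop11},
\[
C_k(\beta,0)=\delta_{k,d_m}\ZZ,\qquad d_m=\dim\overset{m}{\underset{k=1}{\oplus}}E(\hat{\lambda}_k)\geq 2.
\]
Since $u_*,v_*$ are local minimizers of $\beta$ (by Claim 4 in the proof of Proposition \ref{prop8}), we have $C_k(\beta,u_*)=C_k(\beta,v_*)=\delta_{k,0}\ZZ$. For the mountain pass critical point $y_0$ of the $C^{2-0}$ functional $\beta$, I would invoke the standard Morse-theoretic fact (Hofer/Chang type result, compatible with the shifting theorem of Li, Li \& Liu \cite{13} used in Proposition \ref{prop11}) that $C_1(\beta,y_0)\neq 0$ and then show that in fact
\[
C_k(\beta,y_0)=\delta_{k,1}\ZZ\quad\text{for all } k\geq 0.
\]
Finally, because $\beta$ is coercive and bounded below, we have $C_k(\beta,\infty)=\delta_{k,0}\ZZ$.

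With these inputs, the Morse identity (\ref{eq7}) gives
\[
t^{d_m}+2+t=1+(1+t)Q(t).
\]
Evaluating at $t=-1$ yields $(-1)^{d_m}=0$, a contradiction since $d_m\geq 2$. Hence there is a fourth critical point $\hat{y}\in K_{\beta}\setminus\{0,u_*,v_*,y_0\}$. By Claim \ref{cl3} in the proof of Proposition \ref{prop8}, $\hat{y}\in[v_*,u_*]$; by the extremality of $u_*,v_*$ (Proposition \ref{prop7}), $\hat{y}$ cannot be of constant sign, so $\hat{y}$ is nodal. Nonlinear regularity gives $\hat{y}\in C^1_0(\overline{\Omega})$.

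It remains to show that both $y_0$ and $\hat{y}$ lie in the interior of $[v_*,u_*]$ with respect to the $C^1_0(\overline{\Omega})$-topology. For this I would apply the strong maximum principle / tangency argument: from $-\Delta(u_*-y_0)=f(\cdot,u_*)-f(\cdot,y_0)$ together with $H_4(i)$ (which gives a uniform Lipschitz bound on $f(z,\cdot)$ on the compact interval $[-\|v_*\|_\infty,\|u_*\|_\infty]$), one deduces $u_*-y_0\in{\rm int}\,C_+$, and similarly $y_0-v_*\in{\rm int}\,C_+$; the same argument applies to $\hat{y}$. This places both nodal solutions in ${\rm int}_{C^1_0(\overline{\Omega})}[v_0,u_0]$, completing the theorem. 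The main obstacle in this plan is the exact computation of $C_k(\beta,y_0)$ at the mountain pass point: although $C_1(\beta,y_0)\neq 0$ is automatic, pinning down that all other critical groups vanish requires the $C^{2-0}$-shifting theorem combined with a Morse-index analysis of $y_0$ (ruling out higher-index contributions from possible degeneracy), which is the delicate piece that actually powers the contradiction.
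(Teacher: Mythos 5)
Your proposal follows the paper's proof almost exactly; the structure (extremal constant sign solutions from Propositions \ref{prop6} and \ref{prop7}, mountain pass nodal solution via the truncated $C^{2-0}$ functional $\beta$, critical group computations, Morse relation at $t=-1$, strong maximum principle for the $C^1_0(\overline{\Omega})$-interior placement) is identical. The one step you flag as delicate---pinning down $C_k(\beta,y_0)=\delta_{k,1}\ZZ$ at the mountain pass point rather than merely $C_1(\beta,y_0)\neq 0$---is resolved in the paper simply by citing Theorem 2.7 of Li, Li \& Liu \cite{13}, which gives precisely this conclusion for critical points of mountain pass type of a $C^{2-0}$ functional, with no additional Morse-index analysis required. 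One small omission in your write-up: before assuming $K_{\beta}=\{0,u_*,v_*,y_0\}$ consists of four distinct points and feeding this into the Morse relation, you should explicitly check $y_0\neq 0$; this is immediate from comparing $C_k(\beta,y_0)=\delta_{k,1}\ZZ$ with $C_k(\beta,0)=\delta_{k,d_m}\ZZ$ (Proposition \ref{prop11}) and $d_m\geq 2$, which is exactly the comparison the paper records before turning to the Morse identity.
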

\begin{proof}
	From Proposition \ref{prop6}, we already have two nontrivial constant sign solutions
	$$u_0\in {\rm int}\,C_+\ \mbox{and}\ v_0\in-{\rm int}\,C_+.$$
	
	Moreover, by virtue of Proposition \ref{prop7} we may assume that $u_0$ and $v_0$ are extremal (that is, $u_0=u_*\in {\rm int}\,C_+$ and $v_0=v_*\in-{\rm int}\,C_+$). The differentiability of $f(z,\cdot)$ and hypothesis $H_4(i)$ imply that, if $\rho=\max\{||\bar{u}||_{\infty},||\bar{v}||_{\infty}\}$, then we can find $\hat{\xi}_{\rho}>0$ such that for almost all $z\in\Omega$ $x\rightarrow f(z,x)+\hat{\xi}_{\rho}x$ is nondecreasing on $[-\rho,\rho]$.
	
	As in the proof of Proposition \ref{prop8}, using the functional $\beta\in C^{2-0}(H^1_0(\Omega))$ and the mountain pass theorem (see Theorem \ref{th1}), we can find $y_0\in[v_0,u_0]\cap C^1_0(\overline{\Omega})$, which is a solution of problem (\ref{eq67}). We have
	\begin{eqnarray*}
		&&-\Delta y_0(z)+\hat{\xi}_{\rho}y_0(z)=f(z,y_0(z))+\hat{\xi}_{\rho}y_0(z)\\
		&&\hspace{3cm}\leq f(z,u_0(z))+\hat{\xi}_{\rho}u_0(z)\ (\mbox{since}\ y_0\leq u_0)\\
		&&\hspace{3cm}=-\Delta u_0(z)+\hat{\xi}_{\rho}u_0(z)\ \mbox{for almost all}\ z\in\Omega,\\
		&\Rightarrow&\Delta(u_0-y_0)(z)\leq\hat{\xi}_{\rho}(u_0-y_0)(z)\ \mbox{for almost all}\ z\in\Omega,\\
		&\Rightarrow&u_0-y_0\in {\rm int}\,C_+\ (\mbox{by the strong maximum principle}).
	\end{eqnarray*}
	
	Similarly, we show that $y_0-v_0\in {\rm int}\,C_+$. Therefore
	$$y_0\in \mbox{int}_{C^1_0(\overline{\Omega})}[v_0,u_0].$$
	
	Since $y_0$ is a critical point of mountain pass-type for $\beta$, we have from Theorem 2.7 of Li, Li \& Liu \cite{13}
	\begin{equation}\label{eq73}
		C_k(\beta,y_0)=\delta_{k,1}\ZZ\ \mbox{for all}\ k\geq 0.
	\end{equation}
	
	From Proposition \ref{prop11} we know that
	\begin{equation}\label{eq74}
		C_k(\beta,0)=\delta_{k,d_m}\ZZ\ \mbox{for all}\ k\geq 0\ \mbox{with}\ d_m\geq 2.
	\end{equation}
	
	Comparing (\ref{eq73}) and (\ref{eq74}), we infer that
	$$y_0\neq 0\ \mbox{and so}\ y_0\in \mbox{int}_{C^1_0(\overline{\Omega})}[v_0,u_0]\ \mbox{is a nodal solution of (\ref{eq67})}.$$
	
	Recall that $u_0,v_0$ are local minimizers of $\beta$ (see Claim \ref{cl4} in the proof of Proposition \ref{prop8}). Hence we have
	\begin{equation}\label{eq75}
		C_k(\beta,u_n)=C_k(\beta,v_0)=\delta_{k,0}\ZZ\ \mbox{for all}\ k\geq 0.
	\end{equation}
	
	Moreover, the coercivity of $\beta(\cdot)$ (see (\ref{eq43})), implies that
	\begin{equation}\label{eq76}
		C_k(\beta,\infty)=\delta_{k,0}\ZZ\ \mbox{for all}\ k\geq 0.
	\end{equation}
	
	Suppose that $K_{\beta}=\{0,u_0,v_0,y_0\}$. Then from (\ref{eq73})$\rightarrow$(\ref{eq76}) and the Morse relation (\ref{eq7}) with $t=-1$, we have
	\begin{eqnarray*}
		&&(-1)^{d_m}+2(-1)^0+(-1)^1=(-1)^0,\\
		&\Rightarrow&(-1)^{d_m}=0,\ \mbox{a contradiction}.
	\end{eqnarray*}
	
	So, there exists $\hat{y}\in K_{\beta},\ \hat{y}\notin\{0,u_0,v_0,y_0\}$. Then $\hat{y}\in[v_0,u_0]\cap C^1_0(\overline{\Omega})$ is nodal (see Claim \ref{cl3} in the proof of Proposition \ref{prop8} and use standard regularity theorem). In fact, as we did in the beginning of the proof for $y_0$, we can show that
	$$\hat{y}\in \mbox{int}_{C^1_0(\overline{\Omega})}[v_0,u_0].$$
The proof is now complete.
\end{proof}

\section{A special case}

In this section, we consider a special case of problem (\ref{eq1}) under hypotheses $H_2$, which we encounter in the literature.

So, we deal with the following parametric nonlinear Dirichlet problem
\begin{equation}\label{eq77}
	-\Delta_pu(z)=\lambda|u(z)|^{p-2}u(z)-g(z,u(z))\ \mbox{in}\ \Omega,\ u|_{\partial\Omega}=0,\ \lambda>0.
\end{equation}

We impose the following conditions on the perturbation $g(z,x)$.

\smallskip
$H_5:$ $g:\Omega\times\RR\rightarrow\RR$ is a measurable function such that for almost all $z\in\Omega$, $g(z,0)=0$, $g(z,\cdot)$ is locally $\alpha$-H\"{o}lder continuous with $\alpha\in\left(0,1\right]$ and local H\"{o}lder constant $k\in L^{\infty}(\Omega)_+$ and
\begin{itemize}
	\item[(i)] for every $\rho>0$, there exists $a_{\rho}\in L^{\infty}(\Omega)_+$ such that
	$$|g(z,x)|\leq a_{\rho}(z)\ \mbox{for almost all}\ z\in\Omega,\ \mbox{and all}\ |x|\leq\rho;$$
	\item[(ii)] $\liminf\limits_{x\rightarrow\pm\infty}\frac{g(z,x)}{|x|^{p-2}x}\geq\xi^*>\lambda-\hat{\lambda}_1$ uniformly for almost all $z\in\Omega$;
	\item[(iii)] there exist $\xi_0,\xi_*\in\RR,\ \xi_*<\lambda-\hat{\lambda}_2$ such that
	$$\xi_0\leq\liminf\limits_{x\rightarrow 0}\frac{g(z,x)}{|x|^{p-2}x}\leq\limsup\limits_{x\rightarrow 0}\frac{g(z,x)}{|x|^{p-2}x}\leq\xi_*\ \mbox{uniformly for almost all}\ z\in\Omega;$$
\item[(iv)] there exists $M_0>0$ such that for almost all $z\in\Omega$,
\begin{eqnarray*}
		&& x\mapsto\frac{g(z,x)}{x^{p-1}}\ \mbox{is nondecreasing on $[M_0,+\infty)$};\\
		&&x\mapsto\frac{g(z,x)}{|x|^{p-2}x}\ \mbox{is nonincreasing on $(-\infty, -M_0]$}.
	\end{eqnarray*}
\end{itemize}

Setting $f(z,x)=\lambda|x|^{p-2}x-g(z,x)$ and using Theorem \ref{th9}, we can state the following multiplicity theorem for problem (\ref{eq77}).
\begin{theorem}\label{th13}
	If hypotheses $H_5$ hold and $\lambda>\hat{\lambda}_2$ then problem (\ref{eq77}) admits at least three nontrivial solutions
	$$u_0\in {\rm int}\,C_+,\ v_0\in-{\rm int}\,C_+\ \mbox{and}\ y_0\in[v_0,u_0]\cap C^1_0(\overline{\Omega})\ \mbox{nodal}.$$
\end{theorem}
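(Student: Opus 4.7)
The strategy is a direct reduction to Theorem \ref{th9}. Set
$$f(z,x):=\lambda|x|^{p-2}x-g(z,x),$$
so that (\ref{eq77}) becomes an instance of (\ref{eq1}) with reaction $f$; the entire task reduces to verifying that hypotheses $H_5$, together with the assumption $\lambda>\hat{\lambda}_2$, imply $H_2$ for this $f$. The basic regularity demanded by $H_2$ (measurability, $f(z,0)=0$, and local $\alpha$-H\"older continuity with local H\"older constant in $L^{\infty}(\Omega)_+$) is immediate from the corresponding regularity of $g(z,\cdot)$ in $H_5$, together with the fact that $x\mapsto\lambda|x|^{p-2}x$ is itself locally H\"older continuous.

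For each of the four numbered conditions in $H_2$ the verification is essentially algebraic, based on the identity
$$\frac{f(z,x)}{|x|^{p-2}x}=\lambda-\frac{g(z,x)}{|x|^{p-2}x}.$$
Specifically, $H_2(i)$ follows from $|f(z,x)|\le\lambda\rho^{p-1}+a_\rho(z)$ for $|x|\le\rho$, with $a_\rho$ from $H_5(i)$; $H_2(ii)$ is obtained by applying the identity as $x\to\pm\infty$ and invoking $H_5(ii)$, which yields
$$\limsup_{x\to\pm\infty}\frac{f(z,x)}{|x|^{p-2}x}\le\lambda-\xi^{*}<\hat{\lambda}_1,$$
so one may take $\xi:=\lambda-\xi^{*}$; and the monotonicity in $H_2(iv)$ is obtained by the same algebraic transfer on the thresholds $[M_0,+\infty)$ and $(-\infty,-M_0]$ from $H_5(iv)$.

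The key step is $H_2(iii)$, and this is where the standing assumption $\lambda>\hat{\lambda}_2$ is used. Applying the identity near $x=0$ and using $H_5(iii)$, one obtains
$$\lambda-\xi_{*}\le\liminf_{x\to 0}\frac{f(z,x)}{|x|^{p-2}x}\le\limsup_{x\to 0}\frac{f(z,x)}{|x|^{p-2}x}\le\lambda-\xi_0.$$
The hypothesis $\xi_{*}<\lambda-\hat{\lambda}_2$ in $H_5(iii)$ is precisely equivalent to $\lambda-\xi_{*}>\hat{\lambda}_2$ (and requires $\lambda>\hat{\lambda}_2$ to be non-vacuous), so setting $\xi_0':=\lambda-\xi_{*}$ and $\xi_{*}':=\lambda-\xi_0\ge\xi_0'$ satisfies the two-sided bound and the strict inequality $\xi_0'>\hat{\lambda}_2$ demanded by $H_2(iii)$. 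With all of $H_2$ verified, Theorem \ref{th9} applies directly to $f$ and produces the three nontrivial solutions $u_0\in\mathrm{int}\,C_+$, $v_0\in-\mathrm{int}\,C_+$ and $y_0\in[v_0,u_0]\cap C^{1}_{0}(\overline{\Omega})$ nodal of (\ref{eq77}) claimed in the theorem.
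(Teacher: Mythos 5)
Your approach is the same as the paper's: substitute $f(z,x)=\lambda|x|^{p-2}x-g(z,x)$, verify that $H_5$ yields $H_2$, and invoke Theorem \ref{th9}. The verifications of $H_2(i)$, $H_2(ii)$ and $H_2(iii)$ are correct and supply detail the paper leaves implicit; in particular your handling of $H_2(iii)$, where $\xi_*<\lambda-\hat{\lambda}_2$ translates into $\lambda-\xi_*>\hat{\lambda}_2$, is the heart of the reduction.

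There is, however, a genuine gap in your treatment of $H_2(iv)$. You assert that ``the monotonicity in $H_2(iv)$ is obtained by the same algebraic transfer.'' But for $x>0$ the identity
$$\frac{f(z,x)}{x^{p-1}}=\lambda-\frac{g(z,x)}{x^{p-1}}$$
\emph{reverses} the sense of the monotonicity: if $x\mapsto g(z,x)/x^{p-1}$ is nondecreasing on $[M_0,+\infty)$, as $H_5(iv)$ asserts, then $x\mapsto f(z,x)/x^{p-1}$ is nonincreasing there, which is the opposite of what $H_2(iv)$ requires, and the analogous reversal occurs on $(-\infty,-M_0]$. As literally stated, $H_5(iv)$ does not imply $H_2(iv)$, so the reduction does not close at this point. (This appears to reflect a slip in the paper's own statement of $H_5(iv)$: while $H_5(ii)$ and $H_5(iii)$ are the correct mirror images of $H_2(ii)$ and $H_2(iii)$ under the sign flip $g\mapsto -g$, $H_5(iv)$ is a verbatim copy of $H_2(iv)$ rather than its mirror image, which would require $g(z,\cdot)/x^{p-1}$ nonincreasing on $[M_0,+\infty)$ and $g(z,\cdot)/(|x|^{p-2}x)$ nondecreasing on $(-\infty,-M_0]$.) Your write-up presents this step as automatic, which it is not; you should either correct $H_5(iv)$ to the mirrored monotonicity or flag the discrepancy explicitly.
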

\begin{remark}
	This theorem complements the multiplicity result of Papageorgiou \& Papageorgiou \cite{18}.
\end{remark}

In the semilinear case $(p=2)$, we can say more. So, now the problem under consideration is the following:
\begin{equation}\label{eq78}
	-\Delta u(z)=\lambda u(z)-g(z,u(z))\ \mbox{in}\ \Omega,\ u|_{\partial\Omega}=0,\ \lambda>0.
\end{equation}

The hypotheses on the perturbation $g(z,x)$ are the following:

\smallskip
$H_6:$ $g:\Omega\times\RR\rightarrow\RR$ is a measurable function such that for almost all $z\in\Omega$, $g(z,0)=0$, $g(z,\cdot)\in C^1(\RR)$ and
\begin{itemize}
	\item[(i)] for every $\rho>0$, there exists $a_{\rho}\in L^{\infty}(\Omega)_+$ such that
	$$|g'_x(z,x)|\leq a_{\rho}(z)\ \mbox{for almost all}\ z\in\Omega,\ \mbox{and all}\ |x|\leq\rho;$$
	\item[(ii)] $\liminf\limits_{x\rightarrow\pm\infty}\frac{g(z,x)}{x}\geq\xi^*>\lambda-\hat{\lambda}_1$ uniformly for almost all $z\in\Omega$;
	\item[(iii)] $g'_x(z,0)=\lim\limits_{x\rightarrow 0}\frac{g(z,x)}{x}=0$ uniformly for almost all $z\in\Omega$;
\item[(iv)] there exists $M_0>0$ such that for almost all $z\in\Omega$,
\begin{eqnarray*}
		&& x\mapsto\frac{g(z,x)}{x^{p-1}}\ \mbox{is nondecreasing on $[M_0,+\infty)$};\\
		&&x\mapsto\frac{g(z,x)}{|x|^{p-2}x}\ \mbox{is nonincreasing on $(-\infty, -M_0]$}.
	\end{eqnarray*}
\end{itemize}

Again, we set $f(z,x)=\lambda x-g(z,x)$ and using Theorem \ref{th12}, we can state the following multiplicity theorem for problem (\ref{eq78}).
\begin{theorem}\label{th14}
	If hypotheses $H_6$ hold and $\lambda>\hat{\lambda}_2$, then problem (\ref{eq78}) has at least four nontrivial solutions
	$$u_0\in {\rm int}\,C_+,\ v_0\in-{\rm int}\,C_+\ \mbox{and}\ y_0,\hat{y}\in {\rm int}_{C^1_0(\overline{\Omega})}[v_0,u_0]\ \mbox{nodal}.$$
\end{theorem}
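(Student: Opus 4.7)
The plan is to reduce Theorem \ref{th14} to Theorem \ref{th12} via the substitution $f(z,x) = \lambda x - g(z,x)$, which recasts problem (\ref{eq78}) in the form (\ref{eq67}) with $p = 2$. The entire proof then consists of verifying that hypotheses $H_6$, together with the assumption $\lambda > \hat{\lambda}_2$, translate into hypotheses $H_4$ for this $f$. Once this is done, the four nontrivial solutions $u_0 \in {\rm int}\,C_+$, $v_0 \in -{\rm int}\,C_+$, and $y_0, \hat{y} \in {\rm int}_{C^1_0(\overline{\Omega})}[v_0, u_0]$ (both nodal) are delivered by Theorem \ref{th12}.

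The checks for $H_4(i)$, $H_4(ii)$, and $H_4(iv)$ are essentially algebraic. For $H_4(i)$, one has $f'_x(z,x) = \lambda - g'_x(z,x)$, whose modulus is bounded by $\lambda + a_\rho(z)$ on the strip $|x| \leq \rho$ by $H_6(i)$. For $H_4(ii)$, hypothesis $H_6(ii)$ yields
$$\limsup_{x \to \pm\infty} \frac{f(z,x)}{x} = \lambda - \liminf_{x \to \pm\infty} \frac{g(z,x)}{x} \leq \lambda - \xi^* < \hat{\lambda}_1.$$
For $H_4(iv)$, the required monotonicity of $f(z,x)/|x|^{p-2}x$ follows from the identity $f(z,x)/|x|^{p-2}x = \lambda - g(z,x)/|x|^{p-2}x$ together with the monotonicity of $g/|x|^{p-2}x$ imposed in $H_6(iv)$.

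The delicate condition is $H_4(iii)$. From $H_6(iii)$ we read off $f'_x(z,0) \equiv \lambda$. The assumption $\lambda > \hat{\lambda}_2$ allows us to select an integer $m \geq 2$ with $\hat{\lambda}_m \leq \lambda \leq \hat{\lambda}_{m+1}$, and strict inequality at the lower end (shifting $m$ by one if $\lambda$ happens to coincide with an eigenvalue). The global upper bound $F(z,x) \leq \frac{\hat{\lambda}_{m+1}}{2} x^2$ is equivalent to $G(z,x) \geq \frac{\lambda - \hat{\lambda}_{m+1}}{2} x^2$, where $G(z,x) = \int_0^x g(z,s)\,ds$; since $\lambda \leq \hat{\lambda}_{m+1}$, the right-hand side is nonpositive, so the bound amounts to ruling out sufficiently negative excursions of $G$. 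I expect this global $F$-estimate to be the main technical point of the translation: it has to be extracted by combining the vanishing of $g'_x(z,\cdot)$ at $0$ from $H_6(iii)$, the asymptotic bound $g(z,x)/x \geq \xi^* > \lambda - \hat{\lambda}_1$ from $H_6(ii)$ (which forces $G$ to be eventually large and positive), and the monotonicity of $g(z,\cdot)/|x|^{p-2}x$ from $H_6(iv)$ (which controls $G$ in the intermediate range).

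Once hypotheses $H_4$ are verified, Theorem \ref{th12} supplies all four solutions directly; the interior membership $y_0, \hat{y} \in {\rm int}_{C^1_0(\overline{\Omega})}[v_0, u_0]$ is inherited from the strong maximum principle argument $\Delta(u_0 - y_0) \leq \hat{\xi}_\rho (u_0 - y_0)$ and its analogue at $v_0$ already executed in the proof of that theorem, and nothing further needs to be shown.
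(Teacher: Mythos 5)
Your reduction to Theorem~\ref{th12} via $f(z,x)=\lambda x-g(z,x)$ is exactly what the paper does; the paper itself supplies no further detail beyond this substitution. Your verifications of $H_4(i)$, $H_4(ii)$ and $H_4(iv)$ are correct and routine.

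The genuine gap is the global bound $F(z,x)\leq\frac{\hat{\lambda}_{m+1}}{2}x^2$ in $H_4(iii)$, which you correctly flag as the delicate point but then leave as a conjecture (``I expect this global $F$-estimate\dots to be extracted\dots''). It does \emph{not} follow from $H_6$. As you compute, the bound is equivalent to $G(z,x)\geq\frac{\lambda-\hat{\lambda}_{m+1}}{2}x^2$ for all $x\in\RR$, where $G(z,x)=\int_0^x g(z,s)\,ds$. Hypotheses $H_6(ii)$--$(iv)$ control $G$ near $0$ (where $G(z,x)=o(x^2)$) and near $\pm\infty$ (where $G$ is eventually bounded below by a positive multiple of $x^2$), but on an intermediate compact interval $G(z,\cdot)$ can dip to an arbitrarily negative value while every clause of $H_6$ is preserved: take $g$ smooth, vanishing to second order at the origin, dropping to a large negative constant on $[1,2]$, then rising so that $g(x)/x$ is nondecreasing for $x\geq 2$ and eventually exceeds $\xi^*$. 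For such $g$ the inequality $G(z,x)\geq\frac{\lambda-\hat{\lambda}_{m+1}}{2}x^2$ fails at $x=2$ once the dip is deep enough, so the translation as you wrote it does not go through.

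The repair, which you should make explicit, is that the global $F$-bound enters the proof of Proposition~\ref{prop11} only in the degenerate sub-case $f'_x(z,0)\equiv\hat{\lambda}_{m+1}$ a.e. Here $f'_x(z,0)=\lambda$ is constant, so that sub-case occurs precisely when $\lambda$ is an eigenvalue of $(-\Delta,H^1_0(\Omega))$. When $\lambda>\hat{\lambda}_2$ lies in a spectral gap, pick $m\geq 2$ with $\hat{\lambda}_m<\lambda<\hat{\lambda}_{m+1}$; the first branch of the proof of Proposition~\ref{prop11} (the nondegenerate case, cited from Li, Li \& Liu) then delivers $C_k(\beta,0)=\delta_{k,d_m}\ZZ$ without ever touching the global $F$-bound, and Theorem~\ref{th12} applies as you intended. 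If $\lambda$ coincides with an eigenvalue, the argument, yours and implicitly the paper's one-liner alike, requires an additional hypothesis on $g$ to secure the global bound, and you should either impose one or exclude that case.
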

\begin{remark}
	This theorem complements the multiplicity results of Ambrosetti \& Lupo \cite{2}, Ambrosetti \& Mancini \cite{3} and Struwe \cite{19, 20}, which produce only three solutions and there are no nodal solutions among them.
\end{remark}

\medskip
{\bf Acknowledgements.} This research was supported by the Slovenian Research Agency grants
P1-0292, J1-8131, J1-7025, N1-0064, and N1-0083. V.D.~R\u adulescu acknowledges the support through a grant of the Romanian Ministry of Research and Innovation, CNCS--UEFISCDI, project number PN-III-P4-ID-PCE-2016-0130,
within PNCDI III.


\begin{thebibliography}{99}

\bibitem{1}   S. Aizicovici, N.S. Papageorgiou, and V. Staicu, {\it Degree theory for operators of monotone type and nonlinear elliptic equations with inequality constraints}, Memoirs Amer. Math. Soc., Vol. 196, No. \textbf{915} (November 2008).

\bibitem{2}   A. Ambrosetti and D. Lupo,  On a class of nonlinear Dirichlet problems with multiple solutions, {\it Nonlinear Anal.} \textbf{8} (1984), 1145-1150.

\bibitem{3}   A. Ambrosetti and G. Mancini,  Sharp nonuniqueness results for some nonlinear problems, {\it Nonlinear Anal.} \textbf{3} (1979), 635-645.

\bibitem{4}   A. Ambrosetti and P. Rabinowitz,  Dual variational methods in critical point theory and applications, {\it J. Functional Anal.} \textbf{14} (1973), 349-381.

\bibitem{5}   D. Arcoya and D. Ruiz, The Ambrosetti-Prodi problem for the $p$-Laplace operator, {\it Commun. Partial Diff. Equations} \textbf{31} (2006), 849-865.

\bibitem{6}   M. Cuesta, D. de Figueiredo, and J.-P. Gossez,  The beginning of the Fu\v{c}ik spectrum of the $p$-Laplacian, {\it J. Differential Equations} \textbf{159} (1999), 212-238.

\bibitem{7}   M. Filippakis, L. Gasinski, and N.S. Papageorgiou, Nonlinear periodic problems with nonsmooth potential restricted in one direction, {\it Publ. Math. Debrecen} \textbf{68} (2006), 37-62.

\bibitem{8}   M. Filippakis, A. Kristaly, and N.S. Papageorgiou, Existence of five nonzero solutions with exact sign for a $p$-Laplacian operator, {\it Discrete Cont. Dynam. Systems} \textbf{24} (2009), 405-440.

\bibitem{9}   J. Garcia Azorero, J. Manfredi, and J. Peral Alonso, Sobolev versus H\"{o}lder local minimizers and global multiplicity for some quasilinear elliptic equations, {\it Commun. Contemp. Math.} \textbf{2} (2000), 385-404.

\bibitem{10}   L. Gasinski and N.S. Papageorgiou, {\it Nonlinear Analysis}, Chapman \& Hall/CRC, Boca Raton, Fl., 2006.

\bibitem{11}   L. Gasinski and N.S. Papageorgiou, {\it Exercises in Analysis: Part I}, Springer, New York, 2014.

\bibitem{12}   S. Hu and N.S. Papageorgiou, {\it Handbook of Multivalued Analysis. Volume I: Theory}, Kluwer Academic Publishers, Dordrecht, The Netherlands, 1997.

\bibitem{13}   C. Li, S. Li, and J. Liu,  Splitting theorem, Poincar\'e-Hopf theorem and jumping nonlinear problems, {\it J.~Functional Anal.} \textbf{221} (2005), 439-455.

\bibitem{14}   J. Liu and S. Liu, The existence of multiple solutions to quasilinear elliptic equations, {\it Bull. London Math. Soc.} \textbf{37} (2005), 592-600.

\bibitem{15}   S. Liu,  Multiple solutions for coercive $p$-Laplacian equations, {\it J. Math. Anal. Appl.} \textbf{316} (2006), 229-236.

\bibitem{16}   E. Michael, Continuous selections $I$, {\it Annals Math.} \textbf{63} (1956), 361-382.

\bibitem{17}   N.S. Papageorgiou and S. Kyritsi, {\it Handbook of Applied Analysis}, Springer, New York, 2009.

\bibitem{18}   E.H. Papageorgiou and N.S. Papageorgiou, A multiplicity theorem for problems with the $p$-Laplacian, {\it J. Functional Anal.} \textbf{244} (2007), 63-77.

\bibitem{prr1}   N.S. Papageorgiou, V.D. R\u adulescu, and D.D. Repov\v{s}, Robin problems with a general potential and a superlinear reaction, {\it J. Differential Equations} {\bf 263} (2017), no. 6, 3244-3290.

\bibitem{prr2}   N.S. Papageorgiou, V.D. R\u adulescu, and D.D. Repov\v{s}, Positive solutions for super-diffusive mixed problems, {\it Appl. Math. Lett.} {\bf 77} (2018), 87-93.

\bibitem{19}   M. Struwe, A note on a result of Ambrosetti and Mancini, {\it Ann. Mat. Pura Appl.} \textbf{81} (1982), 107-115.

\bibitem{20}   M. Struwe, {\it Variational Methods}, Springer-Verlag, Berlin, 1990.

\bibitem{21}   S. Villegas, A Neumann problem with asymmetric nonlinearity and a related minimizing problem, {\it J. Differential Equations} \textbf{145} (1998), 145-155.

\end{thebibliography}
\end{document}